\DeclareMathOperator{\E}{\mathbb{E}}
\renewcommand{\l}{\lambda}
\newcommand{\m}{\mu}
\newcommand{\f}[2]{\frac{#1}{#2}}
\theoremstyle{theorem}
\newtheorem{theorem}{Theorem}[section]
\newtheorem{lemma}[theorem]{Lemma}
\newtheorem{corollary}[theorem]{Corollary}
\theoremstyle{definition}
\newtheorem{remark}[theorem]{Remark}
\newcommand{\pushright}[1]{\ifmeasuring@#1\else\omit\hfill$\displaystyle#1$\fi\ignorespaces}
\title{Workload analysis of a two-queue fluid polling model}
\author[a]{S. Kapodistria}
\author[a]{M. Saxena}
\author[a]{O. J. Boxma}
\author[b]{O. Kella}
\affil[a]{Department of Mathematics and Computer Science, Eindhoven University of Technology}
\affil[b]{Department of Statistics and Data Science, The Hebrew University of Jerusalem}
\date{\today}
\newcommand\Rp{\mathbb{R}_+}
\begin{document}
\maketitle

\begin{abstract}
\noindent In this paper, we analyze a two-queue random time-limited Markov modulated polling model. In the first part of the paper, we investigate the fluid version: Fluid arrives at the two queues as two independent flows with deterministic rate. There is a single server that serves both queues at constant speeds. The server spends an exponentially distributed amount of time in each queue. After the completion of such a visit time to one queue, the server instantly switches to the other queue, i.e., there is no switchover time.  

\noindent For this model, we first derive the Laplace-Stieltjes Transform (LST) of the stationary marginal fluid content/workload at each queue. Subsequently, we derive a functional equation for the LST of the two-dimensional workload distribution that leads to a Riemann-Hilbert boundary value problem (BVP). After taking a heavy-traffic limit, and restricting ourselves to the symmetric case, the boundary value problem simplifies and can be solved explicitly. 

\noindent In the second part of the paper, allowing for more general (L\'evy) input processes and server switching policies, we investigate the transient process-limit of the joint workload in heavy traffic. Again solving a BVP, we determine the stationary distribution of the limiting process. We show that, in the symmetric case, this distribution coincides with our earlier solution of the BVP, implying that in this case the two limits (stationarity and heavy traffic) commute.
\end{abstract}

\section{Introduction}

The stationary analysis of multi-dimensional Markov processes associated with queueing models is often quite challenging. Even in the two-dimensional case, the characterization of the stationary distribution of fundamental queueing models (such as the shortest queue routing and the coupled processors \cite{fay,cohen2000boundary}) requires solving  boundary value problems. 
The intrinsic complexity of this analysis has led to the development of asymptotic techniques, studying the stationary distribution in some limiting regime of the model parameters; one prominent example being the {\em heavy-traffic limit}, first introduced by Kingman~\cite{kingman} for the single server queue.
In the heavy-traffic limit, a scaled version of the workload process is shown to have a non-trivial limit, which may serve as an approximation to the non-scaled process.
The methodological contribution in this paper is to combine both approaches: For a specific {\em fluid flow polling model with random time-limited service} (which will be specified later), we first derive the boundary value problem, which characterizes the stationary distribution, but for which no explicit solution is known.
We then formulate the boundary value problem obtained in the heavy-traffic limit, which in the symmetric case  leads to an explicit solution for the two-dimensional stationary distribution (in heavy traffic).
A second contribution of our paper is to investigate the heavy-traffic limit of a generalization of the polling model using process limits, allowing for L\'evy input processes into the queues and a more general switching process for the server.
Following~\cite{kw1992},
instead of directly focusing on the stationary distribution and deriving a functional equation for it, we characterize the entire scaled limit process as a two-dimensional reflected Brownian motion in the positive orthant.
We show that in the earlier special (Markovian) symmetric case the stationary distribution of the heavy-traffic process limit coincides with the heavy-traffic limit of the stationary distribution; thus the heavy-traffic limit and the time limit to stationarity commute. 

Specifically, the model that we consider is a polling model with two queues and a single server that moves between the two queues to provide them with service. The policy that governs the switching is random time-limited (RTL): 
The duration of the service period at any queue is random, having an exponential distribution. All these service periods are independent and the server always remains at a queue until the exponentially distributed time expires, even if that queue is empty and the other is not. 
The input and, when the served queue is not empty, the output processes for both queues are assumed to be deterministic fluid streams (with identical rates).
Our motivation to study this RTL Markov modulated fluid polling model comes from our earlier paper~\cite{Saxena2019}, 
in which the present fluid model emerged as an (asymptotic) approximation of a two-queue RTL polling model with Poisson arrivals and exponential service times. 
In the present paper, we show that even under the simplifying assumptions of fluid flows with constant inflow and outflow rates, and symmetric queues, determining the joint stationary workload distribution still requires solving a complicated boundary value problem (BVP).
In the heavy-traffic limit, we obtain and explicitly solve a BVP which is similar to that studied in
\cite{boxma1993compensation} and belongs to a class of two-dimensional BVP that is being discussed in
\cite{cohen2000boundary} (see also \cite{fay}). 
It is intuitive to recognize that in the \textit{asymmetric} case with different loads on the two queues, the queue dynamics are easier to describe (compared to the symmetric case), as the workloads become independent in heavy traffic, reducing the analysis to that of the two marginals. The heaviest loaded queue reaches the saturation point (and must be scaled) while the other queue remains stable (and needs not be scaled).
For this reason, in the first part of the paper, we focus on the symmetric case: the two queues are entirely symmetric in terms of inflow and outflow rates, as well as the server visiting times.
The symmetry assumption puts us in the most interesting case for the heavy-traffic setting that we consider in this paper; it ensures that the workloads in the two queues are of comparable magnitude in heavy-traffic.
In the second part of the paper, we extend the analysis to both a more general (L\'evy input) model and to the study of the symmetric as well as the asymmetric case.

\paragraph{Related literature.}  
Both fluid queueing models and polling models have received much attention in the literature of stochastic service systems;
we refer to the surveys \cite{Kulkarni,boxma2018fluid} for overviews of the literature on fluid queues,
and to the surveys \cite{boonmeiwinands,BorstBoxma} for similar overviews on polling.

In contrast with the extensive literature on fluid queues and on polling, there are only very few studies focussing on polling systems with fluid input.
Some exceptions are Czerniak and Yechiali \cite{czerniak2009fluid}, Boxma et al. \cite{BIKM}, Remerova et al. \cite{remerova2014random}, and Adan et al. \cite{adan2018optimal}; see also \cite[Section 6]{BorstBoxma}. A recent heavy-traffic analysis of a fluid model with two queues {\em in series} is Koops et al.~\cite{koops2016tandem}.  

Polling models with time-limited service also have not been widely studied. Coffman, Fayolle and Mitrani \cite{CFM}
have analyzed a two-queue polling model with exponential visit periods; in their case (contrary to the service protocol pertaining to the model studied in this paper) the server does not stay at an empty queue.
They determine the probability generating function (PGF) of the joint stationary queue length distribution by solving a Riemann-Carleman BVP.
In a series of papers, Al Hanbali et al. (see, e.g., \cite{AHBO}) consider a polling model with several queues and exponential visit periods.
They relate the PGFs of the number of customers in a queue at the end of the server's visit to that queue and at its beginning. This is used as input for a
numerical scheme to approximate the joint queue length PGF at the server departure instants from the queues.
Further references are provided in \cite{Saxena2017}; that paper, and \cite{Saxena2019}, also present
a perturbation method for obtaining queue length PGFs in time-limited polling models.



\paragraph{Organization of the paper.}
In Section \ref{Chap4:ModelDescription}, we describe the RTL Markov modulated fluid queue under consideration. In Section \ref{Chap4:sec:marginal}, we briefly present the LSTs of the model's {\em marginal} stationary workload distributions and obtain their heavy-traffic limits. Section \ref{Chap4:Jointworkload}~is devoted to a discussion of the {\em joint} workload distribution analysis. In Section \ref{Chap4:HT_Analysis}, restricting ourselves to the symmetric case, we derive an explicit expression for  the LST of the joint stationary workload distribution in heavy traffic by solving a Riemann-Hilbert BVP. Several numerical experiments are performed in Section \ref{Chap4:Numerical_results} in order to get more insight into the model.  Section~\ref{sec:processlimit} is devoted to the computation of the scaled joint stationary distribution of an analogous model with a general L\'evy input, generalizing the results obtained in Sections \ref{Chap4:sec:marginal}--\ref{Chap4:HT_Analysis}. 

\section{Model description and notation}\label{Chap4:ModelDescription}
We consider an RTL Markov modulated fluid polling model with two queues.
In our initial description we will not make any symmetry assumptions between the two queues, to facilitate later presentation and discussions regarding these assumptions in Section~\ref{Chap4:Jointworkload}.
As alluded to in the introduction, our main contributions in the first part of the paper (Sections \ref{Chap4:sec:marginal}--\ref{Chap4:HT_Analysis}) concern the heavy-traffic limit for identical parameter settings for the two queues. Arguably, this is the most interesting case under heavy-traffic conditions, because - as we will make precise later - it ensures that the workload processes of the two queues obey a similar scaling when approaching the heavy-traffic saturation point, and, consequently, exhibit a non-trivial correlation.
To the contrary, in an asymmetric setting, one of the two queues will approach heavy traffic while the other remains bounded. 
 In that case, the two workloads are asymptotically independent and their joint heavy-traffic distribution can be obtained from the marginal scaled limit for the queue with heaviest load and the ordinary (non heavy-traffic) marginal distribution of the lighter loaded queue.

In the asymmetric setting, fluid enters queue $j$ (say $Q_j$) at a constant rate of $\lambda_j>0$, $j = 1, 2$. There is a single server that serves both queues with constant rate $\mu_j>0$, $j = 1, 2$.  A special feature of the model is that the server spends  random amounts of time at each queue, these times are independent of the fluid content levels (workloads at the queues); in particular, when a queue becomes empty, the server will remain at that queue (although not providing any service), even if the other queue is not empty, until the expiration of the random visit time. We denote the length of a generic time interval that the server resides at $Q_j$ by $T_j$, $j = 1, 2$. The periods $T_j$ are exponentially distributed with rate $c_j>0$, $j = 1, 2$. Upon completion of the residence time at $Q_j$, the server instantaneously switches to the other queue $Q_{3-j}$, $j=1,2$, i.e., there is no switch-over time. All residence times are independent.


To analyze the model under consideration, we let $V_j(t)$ denote the workload at $Q_j$ at time $t$, $t\geq0$. Assuming $V_1(0) = u_0$, $V_2(0) = l_0$, and the server being at $Q_1$ at time $0$, we can describe the workload at time  $T_1$ and $T_1 + T_2$ as follows:
\begin{itemize}
\item In the interval $(0, T_1]$ the server is serving $Q_1$, therefore the workload (the fluid content) at $Q_1$ decreases linearly as long as it is positive: $V_1(T_1) = \max\{0,  u_0 + (\l_1 - \m_1) T_1\}$. During this time period, the workload at $Q_2$ increases linearly: $V_2(T_1) = l_0 + \l_2 T_1$.
\item Analogously as explained above, in the interval $(T_1, T_1 + T_2]$, the server is serving $Q_2$, therefore the workload at $Q_2$ decreases linearly as long as it is positive, hence $V_2(T_1 + T_2) = \max\{0,  V_2(T_1) + (\l_2 - \m_2) T_2\}$. The workload at $Q_1$ increases linearly, so $V_1(T_1 + T_2) =  V_1(T_1) + \l_1 T_2$.
\end{itemize}
In stationarity, $\left(V_1(T_1 + T_2) , V_2(T_1 + T_2) \right)$ has the same distribution as $\left(V_1(0) , V_2(0)\right)$.

\paragraph{Stability condition.} For the model under consideration, the stability condition states that both queues must have larger capacities than the respective loads imposed on them:
\begin{equation}\label{Chap4:Stability}
\rho_1<  \frac{ c_2}{  c_1 + c_2}~ \text{ and }~  \rho_2<  \frac{c_1}{c_1 + c_2},
\end{equation}
with $\rho_j=\frac{\lambda_j}{\m_j}$, $j=1,2$, cf.~\cite{Saxena2017}. 

\section{Marginal workload analysis}
\label{Chap4:sec:marginal}
In this section, we first briefly focus on the stationary workload of $Q_1$, and hence by identical arguments also of $Q_2$. Let $V_1(t)$ denote the workload at time $t, t \geq 0,$ and $V_1$ the stationary workload at an arbitrary epoch. From a special case of \cite[Section 5]{kella1992storage}, and also from the analysis performed in \cite{chen1992fluid}, the marginal queue length distributions of the model under consideration are known. We include them here for completeness.

\begin{theorem}\label{thm3.1}
The LST of the (marginal) workload of the first queue in stationarity under the stability condition \eqref{Chap4:Stability} is given by
\begin{align}\label{Chap4:MLQ1}
\E\left(e^{- s  V_1}\right) = \frac{1 + \frac{\rho_1 \mu_1}{c_1 + c_2} s}{1 + \frac{\rho_1 \mu_1}{c_2\left(1 - \f{c_1}{c_2} \f{\rho_1}{1 - \rho_1}\right)} s}.
\end{align}
An equivalent formula holds for the LST of $V_2$ under the stability condition \eqref{Chap4:Stability}.
\end{theorem}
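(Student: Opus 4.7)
The plan is to treat the marginal workload process $V_1(t)$ as a Markov-modulated fluid queue driven by the two-state server-location chain $J(t)$, with transition rates $c_1$ and $c_2$, and net drifts $r_1 := \lambda_1 - \mu_1 < 0$ in state $1$ (server at $Q_1$) and $r_2 := \lambda_1 > 0$ in state $2$ (server at $Q_2$). Under the stability condition \eqref{Chap4:Stability} the time-averaged drift $\lambda_1 - \mu_1 c_2/(c_1+c_2)$ is negative, so a proper stationary distribution exists. The LST will then be read off from the joint stationary density of $(V_1, J)$, in line with the MMFQ analyses of \cite{kella1992storage,chen1992fluid}.

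First, I would set up the Kolmogorov forward equations for the stationary density $f_j(x)$ of $(V_1, J)$ on $\{V_1>0,\, J=j\}$, together with an atom $a_1$ at the origin in state $1$; because $r_2 > 0$ the workload leaves $0$ instantly in state $2$, so $a_2 = 0$. For $x > 0$,
\begin{align*}
r_1 f_1'(x) &= -c_1 f_1(x) + c_2 f_2(x), \\
r_2 f_2'(x) &= c_1 f_1(x) - c_2 f_2(x),
\end{align*}
with characteristic equation $\theta\bigl[\lambda_1(\mu_1-\lambda_1)\theta - (c_2\mu_1 - (c_1+c_2)\lambda_1)\bigr] = 0$. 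Integrability discards the root $\theta = 0$ and leaves the unique positive root
\[
\theta^* \;=\; \frac{c_2\mu_1 - (c_1+c_2)\lambda_1}{\lambda_1(\mu_1-\lambda_1)},
\]
which is strictly positive precisely under \eqref{Chap4:Stability}. Hence $f_j(x) = A_j e^{-\theta^* x}$ on $(0,\infty)$.

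Second, I would pin down the three unknowns $(A_1, A_2, a_1)$ from: the internal level-crossing identity $r_2 f_2(x) = -r_1 f_1(x)$, yielding $\lambda_1 A_2 = (\mu_1-\lambda_1) A_1$; the state marginal $A_2/\theta^* = \Pb(J=2) = c_1/(c_1+c_2)$; and normalization, which then gives $a_1 = \lambda_1\theta^*/(c_1+c_2)$.

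Finally, I would assemble
\[
\E\bigl(e^{-sV_1}\bigr) \;=\; a_1 + \frac{A_1 + A_2}{s + \theta^*}
\]
and rewrite it as a ratio of two affine functions of $s$. The coefficient of $s$ in the numerator is $a_1/\theta^* = \lambda_1/(c_1+c_2) = \rho_1\mu_1/(c_1+c_2)$, already matching \eqref{Chap4:MLQ1}. The only non-mechanical step is rewriting $1/\theta^*$: dividing its numerator and denominator by $\mu_1-\lambda_1$ yields $1/\theta^* = \rho_1\mu_1 / \bigl[c_2\bigl(1 - \tfrac{c_1}{c_2}\tfrac{\rho_1}{1-\rho_1}\bigr)\bigr]$, which supplies the denominator coefficient of $s$ in \eqref{Chap4:MLQ1}. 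This algebraic rearrangement is the place requiring the most care; everything else amounts to routine linear algebra on a $2\times 2$ system, and the formula for $V_2$ follows by swapping indices.
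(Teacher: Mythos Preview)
Your proposal is correct. The paper itself does not supply a proof of Theorem~\ref{thm3.1} but simply quotes the result from \cite{kella1992storage,chen1992fluid}; your argument is precisely the two-state Markov-modulated fluid computation that those references would give, and your algebra (the characteristic root $\theta^*$, the identification $a_1=\lambda_1\theta^*/(c_1+c_2)$, and the rewriting of $1/\theta^*$) checks out.
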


\begin{remark}
From the result of Theorem \ref{thm3.1}, it is evident that, for $\theta_1=\frac{c_1+c_2}{\rho_1\mu_1}$,  $\theta_2=\frac{c_2}{\rho_1\mu_1}\left(1-\frac{c_1}{c_2}\frac{\rho_1}{1-\rho_1}\right)$ and with $\mathcal{E}_\theta\sim\text{exp}(\theta)$, 
\begin{enumerate} 
\item $V_1+\mathcal{E}_{\theta_1}$ is distributed like $\mathcal{E}_{\theta_2}$, with $ V_1,\mathcal{E}_{\theta_1}$ independent.
\item The distribution of $V_1$ is a $(\nicefrac{\theta_2}{\theta_1},1-\nicefrac{\theta_2}{\theta_1})$ mixture of zero and $\mathcal{E}_{\theta_2}$.
\end{enumerate}
\end{remark}

With the result of Theorem \ref{thm3.1}, we can study the behavior of the workload $V_1$ in heavy traffic, i.e., when $\rho_1 \uparrow \f{c_2}{c_1 + c_2}$.

\begin{lemma}\label{Chap4:MarginalHT}  For $\rho_1 \uparrow \f{c_2}{c_1 + c_2}$,
\begin{eqnarray}
\left(\f{c_2}{c_1 + c_2}-\rho_1\right) V_1 \overset{d}{\longrightarrow} Z,
\end{eqnarray}
where $Z$ is an exponentially distributed random variable with mean $\f{c_1c_2 \mu_1}{(c_1 + c_2)^3}$.
\end{lemma}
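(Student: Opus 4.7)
The plan is to prove convergence in distribution by proving pointwise convergence of Laplace-Stieltjes transforms and then invoking the standard continuity theorem. Since the limit candidate is an exponential distribution with mean $m := \frac{c_1 c_2 \mu_1}{(c_1+c_2)^3}$, whose LST $1/(1+ms)$ is continuous at $s=0$, LST-convergence at every $s\geq 0$ will suffice.

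Concretely, set $\varepsilon := \frac{c_2}{c_1+c_2}-\rho_1 > 0$ and, using Theorem~\ref{thm3.1}, write
\begin{equation*}
\E\bigl(e^{-s\,\varepsilon V_1}\bigr) \;=\; \frac{1 + \frac{\rho_1 \mu_1}{c_1+c_2}\,\varepsilon s}{1 + \dfrac{\rho_1 \mu_1}{c_2\bigl(1 - \tfrac{c_1}{c_2}\tfrac{\rho_1}{1-\rho_1}\bigr)}\,\varepsilon s}.
\end{equation*}
The numerator clearly tends to $1$ as $\varepsilon\downarrow 0$, since its extra term is $O(\varepsilon)$. The whole argument therefore hinges on evaluating the limit of the coefficient of $s$ in the denominator.

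The key algebraic step is to rewrite
\begin{equation*}
1 - \frac{c_1}{c_2}\frac{\rho_1}{1-\rho_1} \;=\; \frac{c_2 - (c_1+c_2)\rho_1}{c_2(1-\rho_1)} \;=\; \frac{(c_1+c_2)\,\varepsilon}{c_2(1-\rho_1)},
\end{equation*}
so that the factor $\varepsilon$ in the numerator cancels precisely the vanishing factor in the denominator. Substituting this identity, the coefficient of $s$ becomes
\begin{equation*}
\frac{\rho_1 \mu_1\,\varepsilon}{c_2 \cdot \frac{(c_1+c_2)\varepsilon}{c_2(1-\rho_1)}} \;=\; \frac{\rho_1 \mu_1\,(1-\rho_1)}{c_1+c_2},
\end{equation*}
which, as $\rho_1\uparrow \frac{c_2}{c_1+c_2}$ (so that $1-\rho_1\to \frac{c_1}{c_1+c_2}$), tends to $\frac{c_2}{c_1+c_2}\cdot\mu_1\cdot\frac{c_1}{c_1+c_2}\cdot\frac{1}{c_1+c_2} = \frac{c_1 c_2 \mu_1}{(c_1+c_2)^3} = m$.

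Putting everything together yields $\E(e^{-s\varepsilon V_1}) \to 1/(1+ms)$ for every $s\geq 0$, which is the LST of the claimed exponential law; Lévy's continuity theorem then gives the stated convergence in distribution. The only non-routine point is noticing the cancellation of $\varepsilon$ described above — once the critical factor is rewritten explicitly as a multiple of $\varepsilon$, the rest of the computation is immediate. I expect no other obstacles.
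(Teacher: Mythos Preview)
Your proposal is correct and follows essentially the same approach as the paper: substitute $s \mapsto \bigl(\tfrac{c_2}{c_1+c_2}-\rho_1\bigr)s$ in the marginal LST of Theorem~\ref{thm3.1} and let $\rho_1 \uparrow \tfrac{c_2}{c_1+c_2}$ to obtain the exponential LST $1/(1+ms)$. Your write-up is in fact more explicit than the paper's, spelling out the key cancellation of $\varepsilon$ in the denominator and invoking the continuity theorem, whereas the paper simply states the limiting LST and identifies it.
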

\begin{proof}
Replacing $s$ by $\left(\f{c_2}{c_1 + c_2}-\rho_1 \right)s$ in \eqref{Chap4:MLQ1} and taking the limit as $\rho_1 \uparrow \f{c_2}{c_1 + c_2}$, yields   
\begin{equation} \label{Chap4:HTMarginal}
\lim\limits_{\rho_1 \uparrow \f{c_2}{c_1 + c_2}}\E\left(e^{-\left(\f{c_2}{c_1 + c_2}-\rho_1\right)sV_1}\right) = \f{1}{ 1 + \f{c_1c_2 \mu_1}{(c_1 + c_2)^3} s}.
\end{equation}
Note that the right hand side (r.h.s.) in \eqref{Chap4:HTMarginal} corresponds to the LST of
an exponentially distributed random variable with mean $\f{c_1c_2 \mu_1}{(c_1 + c_2)^3}$.
\end{proof} 

\section{Joint workload analysis}\label{Chap4:Jointworkload}
We now focus on the joint workload distribution, restricting ourselves to the symmetric case, i.e.,  $c_1 = c_2 = c, \l_1 = \l_2 = \l$ and $\m_1 = \m_2 = \mu$. 
A main stepping stone in our analysis is the functional equation in~\eqref{EqJointstep15} below.
A corresponding functional equation can be derived for the asymmetric case, see also~\cite{Saxena2017}, but for the purpose of this paper it suffices to show that the symmetric case leads to a complicated BVP that, although it can be solved, provides little probabilistic insight to the problem at hand. 
Our next step is to analyze it under heavy-traffic, and, as explained earlier, the symmetric case is then the interesting one.

As a side remark, note that for both queues to reach heavy traffic simultaneously, it suffices to have $\lambda_1/\mu_1=\lambda_2/\mu_2$ if $c_1=c_2$; additionally demanding that $\lambda_1=\lambda_2$ (and hence $\mu_1=\mu_2$) amounts to choosing a different scaling unit for the workloads.\\

\textbf{Step 1:} Calculation of $\E\left(e  ^{-s_1 V_1(T_1) - s_2  V_2(T_1)}|V_1(0)=u_0,V_2(0)= l_0\right)$.\\
In this step, we calculate the LST of the joint workload distribution at time $T_1$. From the observations listed above the stability condition~\eqref{Chap4:Stability} in Section~\ref{Chap4:ModelDescription}, we obtain
\begin{align}
&\E\left(e^{-s_1   V_1(T_1) - s_2  V_2(T_1)}|V_1(0)=u_0,V_2(0)=l_0\right)
\nonumber\\
&= c e^{-s_2 l_0}  \left(\int_{t = 0}^{\frac{u_0}{\m - \l}} e^{- ( s_2  \l + c )t} e^{-s_1  (u_0 + (\l - \m)t)} {\rm d}t + \int_{t=\frac{u_0}{\m - \l}}^{\infty} e^{- ( s_2 \l + c)t} {\rm d}t\right) \nonumber \\
&= \frac{c}{ c +  s_2 \l} \left[\frac{s_2 \l + c}{s_2 \l + c + s_1(\l - \m)} e^{-s_1 u_0 - s_2 l_0} + \frac{ s_1(\l - \m)}{s_2 \l + c + s_1(\l - \m)} e^{- \frac{c + s_2 \l}{\m - \l}u_0 -s_2 l_0}\right] \label{Jointstep1}.
\end{align}

\textbf{Step 2:} Calculation of $\E\left(e^{-s_1 V_1(T_1) - s_2 V_2(T_1)}\right)
$ in stationarity:

In stationarity,  $\left(V_1(0), V_2(0)\right)$ and $\left(V_1(T_1+T_2), V_2(T_1+T_2)\right)$ have the same distribution.
Unconditioning, we obtain from \eqref{Jointstep1}:
\begin{align}
\label{EqJointWrk}
&\E\left(e^{-s_1 V_1(T_1) -s_2 V_2(T_1)}\right) \nonumber \\ &=\frac{c}{ c +  s_2 \l} \Big[\frac{s_2 \l + c}{s_2 \l + c + s_1(\l - \m)} \E\left(e^{-s_1 V_1(0) - s_2 V_2(0)}\right) \nonumber \\  
&\quad + \frac{ s_1(\l - \m)}{s_2 \l + c 
+ s_1(\l - \m)} \E\left(e^{- \frac{c + s_2 \l}{\m - \l} V_1(0) - s_2 V_2(0)}\right) \Big].
\end{align}

\paragraph{Formulation of a functional equation.} Since we are interested in the symmetric case,  we can formulate a functional equation corresponding to \eqref{EqJointWrk} by defining 
\begin{equation*}
\tilde{\nu}(s_1, s_2) := \E\left(e^{-s_1  V_1(0) -s_2  V_2(0)}\right),
\end{equation*} 
and then by symmetry,
\begin{equation*}
\tilde{\nu}(s_2, s_1) = \E\left(e^{-s_1 V_1(T_1) -s_2  V_2(T_1)}\right).
\end{equation*} 
Further, defining
\begin{equation}\label{function_f}
f(s_1, s_2)  := s_1 \l + c+  s_2(\l - \m),
\end{equation}
we obtain
\begin{align}
\label{EqJointstep1}
&\tilde{\nu}(s_2, s_1) = \frac{c}{f(s_2, s_1) } \tilde{\nu}(s_1, s_2) +  \frac{c}{ f(s_2, s_1)} \frac{ s_1(\l - \m)}{s_2 \l + c} \tilde{\nu}\left(\frac{s_2 \l + c}{\m - \l}, s_2\right).
\end{align}
Now  substituting $s_1 = s_2$, gives
\begin{align}
\label{EqJointstep11}
\tilde{\nu}\left(\frac{s_2 \l + c}{\m - \l}, s_2\right) =  \frac{(2 \l - \m)(s_2 \l +  c)}{c (\l - \m)} \tilde{\nu}(s_2, s_2).
\end{align}
Combining \eqref{EqJointstep1} and \eqref{EqJointstep11} yields
\begin{align}
\label{EqJointstep12}
\tilde{\nu}(s_2, s_1) = \frac{c}{f(s_2, s_1) } \tilde{\nu}(s_1, s_2) + \frac{(2\l - \mu)s_1}{f(s_2, s_1)} \tilde{\nu}(s_2, s_2).
\end{align}
By symmetry (after interchanging the indexes), 
\begin{align}
\label{EqJointstep13}
\tilde{\nu}(s_1, s_2) = \frac{c}{f(s_1, s_2) } \tilde{\nu}(s_2, s_1) +  \frac{(2\l - \mu)s_2}{f(s_1, s_2)} \tilde{\nu}(s_1, s_1).
\end{align}
Combining \eqref{EqJointstep12} and \eqref{EqJointstep13}, it follows that
\begin{align}
\label{EqJointstep14}
\tilde{\nu}(s_1, s_2) &= \frac{c^2}{f(s_1, s_2)f(s_2, s_1) } \tilde{\nu}(s_1, s_2) +    \frac{c(2\l - \m)s_1}{ f(s_1, s_2) f(s_2, s_1)} \tilde{\nu}(s_2, s_2) \nonumber \\
&\quad + \frac{(2\l - \mu)s_2}{f(s_1, s_2)} \tilde{\nu}(s_1, s_1),
\end{align}
so that finally, 
\begin{align}
\label{EqJointstep15}
&\f{\tilde{k}(s_1, s_2)}{s_1s_2} \tilde{\nu}(s_1, s_2)  = -2 \mu \left(\f{1}2 - \f{\l}{\mu}\right)\left[\f{c}{s_2}\ \tilde{\nu}(s_2, s_2) +   \f{f(s_1, s_2)}{s_1} \  \tilde{\nu}(s_1, s_1)\right],
\end{align}
with $\tilde{k}(s_1, s_2) = f(s_1, s_2) f(s_2, s_1) - c^2$ and with $f(s_1, s_2)$ defined in Equation \eqref{function_f}. 


Equations of this type have been studied in the monograph \cite{cohen2000boundary}. There a solution procedure for the present problem is outlined, which amounts to the following global steps:
\begin{description}
\item[Step A.] Consider the zeros of the {\em kernel} equation $\tilde{k}(s_1, s_2)$,
that have $\mathrm{Re}[s_1]$, $\mathrm{Re}[s_2] \geq 0$.
For such pairs $(s_1, s_2)$, $\tilde{\nu}(s_1, s_2)$ is analytic, and hence, for those pairs, the l.h.s.\ of \eqref{EqJointstep15} is equal to zero.

\item[Step B.] A suitable set of those zeros  of the kernel may form a contour. The fact that the r.h.s. of \eqref{EqJointstep15} is zero on that contour (the "boundary"), in combination with analyticity properties of $\tilde{\nu}(s_1,s_1)$ and $\tilde{\nu}(s_2,s_2)$ inside and/or outside that contour, can be used to formulate a Riemann or Riemann-Hilbert BVP. The solution of such a problem yields $\tilde{\nu}(s_1,s_1)$ and $\tilde{\nu}(s_2,s_2)$. Then $\tilde{\nu}(s_1,s_2)$ follows via \eqref{EqJointstep15}.
\end{description}

Unfortunately, the above steps do not constitute a simple, straightforward recipe.
For example, several choices of zero pairs are possible in the present problem,
and it is not a priori clear what is the best choice. Therefore, to successfully employ this Boundary Value method (BVM) requires a detailed investigation of the zeros of the kernel $\tilde{k}(s_1, s_2)$ of the functional equation. In what follows in this section, we describe in more detail these steps and emphasize the hurdles we encounter.

\paragraph{Kernel analysis.}
In the analysis of \eqref{EqJointstep15}, a crucial role is played by the kernel equation $\tilde{k}(s_1,s_2) = 0$.
Finding a suitable contour as mentioned above requires analyzing all pairs $(s_1,s_2)$ that solve the kernel equation, which is equivalent to
\begin{align}\label{kernel_s}
\lambda  (\lambda-\mu) s_2^2 + \Big[c(2 \lambda - \mu) &+ ((\lambda - \mu)^2 + \lambda^2)s_1\Big] s_2 \nonumber \\ &+   \lambda  (\lambda - \mu) s_1^2 + c (2\lambda - \mu) s_1 = 0,
\end{align}
with $\mathrm{Re}[s_1], \mathrm{Re}[s_2] \ge0$. By solving the above equation, we obtain the zeros of the kernel as
\begin{equation}\label{Chap4_ernel_root}
{s}_2^{\pm}(s_1) = \frac{- c(2 \lambda - \mu) - ((\lambda - \mu)^2 + \lambda^2)s_1  \pm (\mu-2 \lambda ) \mu   \sqrt{ \Delta(s_1) }}{2 \lambda  (\lambda -\mu )},
\end{equation}
with discriminant $\Delta(s_1) = s_1^2 - \f{c }{\mu } \f{1}{1/2 - \lambda /\mu } s_1  + \f{c^2}{\mu^2}$. The function ${s}_2^{\pm}(s_1)$ has two real branching points 
$$s_1^{\pm} = \f{c }{\mu } \f{1}{\nicefrac{1}{2} - \nicefrac{\lambda }{\mu} } \left(1 \pm \sqrt{1 - 4 \left(\nicefrac{1}{2} - \nicefrac{\lambda }{\mu}\right)^2}\right),$$ with $0 < s_1^{-} <  s_1^{+}$. Note that for $s_1\in (s_1^{-} ,  s_1^{+})$, ${s}_2^{\pm}(s_1) $ is a complex number, say ${s}_2^{\pm} =u+iv$ (where in the last equality we have suppressed the dependence on $s_1$). Noting that ${s}_2^{+}+{s}_2^{-} =2u$ and that ${s}_2^{+}\times  {s}_2^{-} =u^2+v^2$, we can define the contour that supports ${s}_2^{\pm}(s_1) $ for  $s_1\in (s_1^{-} ,  s_1^{+})$.
After cumbersome but straightforward computations, we obtain that 
\begin{align}
s_1&=\frac{c(\mu-2\lambda)+2\lambda u(\mu-\lambda)}{2 \lambda ^2 +\mu( \mu -2 \lambda )},\label{Eq:EllipseContour_s1}\\
r^2&=v^2+\frac{\left(u \kappa-\tau\right)^2}{\xi^2},
\label{Eq:EllipseContour}
\end{align}
with 
\begin{align*}
\kappa&=\sqrt{\left(3 \lambda ^2-3 \lambda  \mu +\mu ^2\right) \left(\lambda ^2-\lambda  \mu +\mu ^2\right)}, \\
\tau&=-c \mu  (\mu -2 \lambda)<0,\\
\xi&=2 \lambda ^2-2 \lambda  \mu +\mu ^2=2 \lambda ^2+\mu(\mu-2 \lambda)>0, \\
r^2&=\frac{c^2 \left(\lambda ^2-\lambda  \mu +\mu ^2\right) \left(\lambda  (\mu -\lambda ) \left(3 \lambda ^2-3 \lambda  \mu +\mu ^2\right)+(2 \lambda -\mu )^2\right)}{\lambda  (\mu -\lambda ) \left(2 \lambda ^2-2 \lambda  \mu +\mu ^2\right)^2}>0,
\end{align*}
which describes an ellipse for $0<\lambda<\nicefrac{\mu}{2}$. Let us denote the set by $$\tilde{E}  =  \set{(u, v) \in \left[\frac{\tau-r\xi}{\kappa},\frac{\tau +r\xi}{\kappa}\right]\times\mathbb{R} \, \Big| \,v^2+\frac{(u \kappa-\tau )^2}{\xi^2}=r^2 }.$$

\paragraph{BVM: Solution of the functional equation \eqref{EqJointstep15}.} Note that in order to solve the  functional equation, it suffices to compute $ \tilde{\nu}(s, s)$, for $\mathrm{Re}[s]\geq 0$. 

To this purpose, we take $s_1$ with $s_1\in (s_1^{-} ,  s_1^{+})$ and $s_2^{\pm}(s_1) = u \pm iv$, with $(u, v) \in \tilde{E}$. For all such $(s_1, s_2^{\pm}(s_1))$ pairs, the l.h.s. of \eqref{EqJointstep15} becomes zero, and hence, for all $s_2 = s_2^{\pm}(s_1)$, we have
\begin{align*}
\frac{\ \tilde{\nu}(s_1, s_1)}{s_1} &=\frac{-c}{f(s_1, s_2)}\frac{ \tilde{\nu}(s_2, s_2)}{ s_2}=\frac{f(s_2, s_1)}{-c s_2} \tilde{\nu}(s_2, s_2),
\end{align*}
where in the last equality we have used the fact that $(s_1,s_2)$ are roots of $\tilde{k}(s_1,s_2)=0$.
For $s_1 \in (s_1^{-} ,  s_1^{+})$, $\nicefrac{ \tilde{\nu}(s_1, s_1)}{ s_1}$ is real valued, thus, 
\begin{align*}
\mathrm{Re}\left[ -i f(s_2, s_1)  \tilde{\nu}(s_2, s_2)/ c s_2\right]&=0,
\end{align*}
with $f(s_2, s_1) / c s_2= (s_2 \l + c+  s_1(\l - \m)) / c s_2$.
For $s_2 = u + iv,\ (u, v) \in \tilde{E}$ and $s_1$ given in Equation  \eqref{Eq:EllipseContour_s1}, the above simplifies to
\begin{align}
-i\frac{ f(s_2, s_1)  }{c s_2 }&=
\frac{\lambda  v \left(2 u (\lambda -\mu )^2-c \mu \right)}{c\left(2 \lambda ^2-2 \lambda  \mu +\mu ^2\right) \left(u^2+v^2\right)} \nonumber\\
&\quad -i 
\frac{\lambda  \left(\mu  u (c+2 \lambda  u-\mu  u)+v^2 \left(2 \lambda ^2-2 \lambda  \mu +\mu ^2\right)\right)}{c \left(2 \lambda ^2-2 \lambda  \mu +\mu ^2\right) \left(u^2+v^2\right)}
\label{Eq:Gz_BVP_Def}\\
&:=a(u,v)+ib(u,v),\ (u,v)\in \tilde{E}.\label{Eq:Gz_BVP_Def_ab}
\end{align}
Next, we transform the problem into a Riemann-Hilbert problem on the unit circle $D$. For this purpose, we define $\tilde\phi$ (with inverse $\tilde\psi$) to be a conformal mapping of the interior of the unit circle $D$ onto the region bounded by $\tilde E$ with normalization conditions $\tilde\phi \left(-1\right)=\frac{\tau-r\xi}{\kappa}$, $ \tilde\phi (0)=\frac{2\tau}{\kappa}$, and  $\tilde\phi \left(1\right)=\frac{\tau+r\xi}{\kappa}$. 
That allows us to translate the Riemann-Hilbert BVP on and inside $\tilde E$ to the following Riemann-Hilbert BVP (cf.\ \cite[Section I.3.5]{cohen2000boundary} and \cite[Section 6]{boxma1993compensation}):
Let $D$ denote the unit circle contour and $D^+$ the interior of the unit circle,  then the BVP, with $a(\cdot)$ and $b(\cdot)$ real known functions defined on $D$,
\begin{equation}\label{BVPJointHT}
\mathrm{Re} [ \left(a(t)-ib(t)\right) h(t)] = 0, \, \, t\in  D,
\end{equation}
for some function $ h (\cdot)$ analytic in $D^+$ and continuous in $D^+\cup D$, has the following solution, cf.\ \cite{boxma1993compensation} and \cite[Section I.3.5]{cohen2000boundary},
\begin{equation}
 h (w) =  h _0\, \mathrm{Exp}\left(
\frac{1}{2\pi}
 \int_{t\in D}
 \mathrm{arctan}\left(
 \frac{b(t)}{a(t)}
 \right)
 \frac{t+w}{t-w}\frac{1}{t}\mathrm{d}t
 \right)
 , \, \, w \in D^+,
\end{equation}
where $ h _0$ is a constant and 
$$\mathrm{arctan}\left(
 \frac{b(t)}{a(t)}
 \right)
 =\frac{1}{2i}\log \left(\frac{a(t)+ib(t)}{a(t)-ib(t)}\right)
 .$$
Considering the conformal mapping from the ellipse $\tilde E$ to the unit circle $D$, say $\tilde\psi (\cdot)$, which is explicitly expressed in the Jacobi elliptic function (the sine of the amplitude  -- sinus amplitudinis -- or $\mathrm{sn}$, see, e.g., \cite[Sections 24--25]{Akhiezer}), yields, for $s_2$ inside the ellipse $\tilde{E}$,
\begin{equation}\label{Eq:Solution_BVP_original}
\tilde{\nu}(s_2, s_2) = 
  h _0\, \mathrm{Exp}\left(\frac{1}{4\pi i}
 \int_{t\in\tilde{E}}
\frac{1}{2i}\log \left(\frac{a(\tilde{\psi}(t))+ib(\tilde{\psi}(t))}{a(\tilde{\psi}(t))-ib(\tilde{\psi}(t))}\right)
 \frac{\tilde{\psi}(t)+s_2}{\tilde{\psi}(t)-s_2}\frac{1}{\tilde{\psi}(t)}\mathrm{d}\tilde{\psi}(t)
 \right),
\end{equation}
with $a(\cdot)$ and $b(\cdot)$ defined in \eqref{Eq:Gz_BVP_Def_ab}. 
The constant  $ h _0$ is determined from the normalizing condition $\tilde{\nu}(0,0)=1$.
With the above analysis,  we can compute the LST of the total workload, based on the conformal mapping $ \tilde{\psi}(\cdot)$. That enables us to explicitly determine $\tilde{\nu}(s_1, s_2)$ as defined in Equation \eqref{EqJointstep15}. As evident from Equation \eqref{Eq:Solution_BVP_original}, this is quite cumbersome and typically leads to expressions in which one needs to perform a difficult computational procedure as they involve inverting the LST, which is in terms expressed using the Jacobi elliptic function. In addition to the numerical complications, due to the nature of the solution of the BVP, it is difficult to gain probabilistic insight into the problem at hand. 

In addition to the above mentioned hurdles, it is also important to note that  by definition $\tilde{\nu}(s_2, s_2)$ is analytic for $\mathrm{Re}[s_2]=u \geq 0$, but the domain $\tilde{E}$ requires the analytic continuation of $\tilde{\nu}(s_2, s_2)$ to $\mathrm{Re}[s_2]=u \geq \nicefrac{(\tau-r\xi)}{\kappa}$ (note that $\nicefrac{(\tau-r\xi)}{\kappa}<0$). This would constitute one further hurdle in the analysis. 

For all aforementioned reasons, we instead focus on the heavy-traffic setting of the model and solve the resulting simpler BVP.

Note that the above analysis is very similar to the one performed in \cite[Section III.1]{cohen2000boundary}, as also there the problem at hand (of two queues in parallel under the join the shortest queue routing protocol) yields a Riemann-Hilbert problem on an ellipse, cf. \cite{OnnoJacquesIvo}. Because of the similarities between the two problems, one could further investigate other possible equivalent expressions to \eqref{Eq:Solution_BVP_original} pertaining to a meromorphic expansion of the equation which could be explicitly inverted, cf. \cite[Section III.1.4, Equation (4.11)]{cohen2000boundary}.

\section{Heavy-traffic analysis of the joint workload distribution}\label{Chap4:HT_Analysis}
In this section, we shall determine the heavy-traffic limit of the LST of the scaled joint workload distribution of the symmetric model in stationarity. 
In what follows, we use functional equation \eqref{EqJointstep15}. Let $\rho$ be the load on each of the two queues  ($\rho = \nicefrac{\l}{\mu}$).
We scale the functional equation by replacing $s_1$ by $\left(\nicefrac{1}{2} - \rho\right)s_1$, and $s_2$ by $\left(\nicefrac{1}{2} - \rho\right)s_2$. After dividing by $-2 \mu c$ in \eqref{EqJointstep15} and taking the limit $\rho \uparrow \nicefrac{1}{2}$, we obtain the following functional equation
\begin{align}
\label{Scaling2HT1}
\f{\tilde{k}^{\star}(s_1, s_2)}{s_1 s_2} \tilde{\nu}^{\star}(s_1, s_2)  &=  \f{\tilde{\nu}^{\star}(s_1, s_1)}{s_1} + \f{\tilde{\nu}^{\star}(s_2, s_2)}{s_2},
\end{align}
where $\tilde{\nu}^{\star}(s_1, s_2) = \lim\limits_{\rho \uparrow \nicefrac{1}{2}}  \E(e^{-s_1\left(\nicefrac{1}{2} - \rho\right)V_1 -s_2\left(\nicefrac{1}{2} - \rho\right)V_2 })$ and
\begin{align}\label{Kernel}
\tilde{k}^{\star}(s_1, s_2)
&= -\lim\limits_{\rho \uparrow \nicefrac{1}{2}} \f{1}{2 \mu c\left(\nicefrac{1}{2} - \rho\right)^2}~~ \tilde{k}\left(\left(\nicefrac{1}{2} - \rho\right)s_1,\left(\nicefrac{1}{2} - \rho\right)s_2\right) \nonumber \\
&= s_1   + s_2 + \frac{\mu}{8c}\big(s_1 - s_2\big)^2 .
\end{align}

There is one unknown function in the r.h.s.\ of \eqref{Scaling2HT1}: $\tilde{\nu}^{\star}(s, s)$.
We calculate this unknown function using the BVM by applying Step A and Step B discussed in Section \ref{Chap4:Jointworkload}.

\paragraph{Kernel analysis.}
To apply the BVM, one needs to investigate the zeros of kernel $\tilde{k}^{\star}(s_1, s_2)$. By setting $\tilde{k}^{\star}(s_1, s_2) = 0$, we obtain
\begin{align}
s_2^{\pm}(s_1)
&= \f{-1 +  \f{\mu}{4c} s_1 \pm \sqrt{1 - \frac{\mu}{c} s_1}}{\f{\mu}{4c}}.
\end{align}
Note that $s_2^{\pm}(s_1)$ has a single branching point at $s_1 = \f{c}{\mu}$. For real valued $s_1$ with $s_1 > \nicefrac{c}{\mu}$, the function $s_2^{\pm}(s_1)$ is complex valued. Letting $s_2^{\pm}(s_1) = u \pm i v$, we obtain
\begin{align}\label{KernelEquation}
u^2  + v^2 = s_2^{+}(s_1) s_2^{-}(s_1) 
&= \f{\left(-1 +  \f{\mu}{4c} s_1\right)^2 - 1 + \frac{\mu}{c} s_1}{\left( \f{\mu}{4c} \right)^2},
\end{align}
and
\begin{align}
2 u = s_2^{+}(s_1) + s_2^{-}(s_1) 
= \f{-1 +  \f{\mu}{4c} s_1}{\f{\mu}{8c}}.
\end{align}
Computing $s_1 = u + \nicefrac{3c}{\mu}$ from the above equation and substituting it into \eqref{KernelEquation}, we have
\begin{align}\label{KernelEquation1}
u^2  + v^2 &= \f{\left(\f{\mu}{4c}\right)^2 u^2 - 1 + \f{\mu}{c}u + 4}{\left(\f{\mu}{4c}\right)^2}.
\end{align}
Simplifying the above equation yields
\begin{align}\label{Parabola}
v^2 
&= \f{16 c}{\mu} \left(u + \f{3 c}{\mu}\right),
\end{align}
which describes a parabola in the complex plane. We will restrict ourselves to the following set:
$$E  =  \set{(u, v) \in (-\f{3c}{\mu}, \infty) \times \mathbb{R} \mid v^2 = \f{16 c}{\mu} \left(u + \f{3 c}{\mu}\right) }.$$

\paragraph{BVM: Solution of the functional equation \eqref{Scaling2HT1}.}
Now we take $s_1$ with $s_1 > \nicefrac{c}{\mu}$ and $s_2^{\pm}(s_1) = u \pm iv$, with $(u, v) \in E$. For all such $(s_1, s_2^{\pm}(s_1))$ pairs, the r.h.s. of \eqref{Scaling2HT1} becomes zero, and hence, for all $s_2 = s_2^{\pm}(s_1)$, we have
\begin{equation}
\f{\tilde{\nu}^{\star}(s_2, s_2)}{s_2} = - \f{\tilde{\nu}^{\star}(s_1, s_1)}{s_1}.
\end{equation}
For $s_1 > \nicefrac{c}{\mu}$, the r.h.s. of the above equation is real, thus yielding
\begin{equation}
\mathrm{Re}\left[g(s_2) \right] = 0, \, \, \mbox{for} \, \, s_2 =  s_2^{\pm}(s_1) = u \pm iv, ~\mbox{with}~ (u, v) \in E \backslash \{(0, 0)\},
\end{equation}
where $g(s_2) = i \f{\tilde{\nu}^{\star}(s_2, s_2)}{s_2}$.
Notice that  $\tilde{\nu}^{\star}(s_2, s_2)$ is analytic for $\mathrm{Re}[s_2] \geq 0$.
Below, we  prove, in Lemma \ref{Chap4:Claim}, that $\tilde{\nu}^{\star}(s_2, s_2)$ is analytic on the strip $- \nicefrac{3 c}{\mu} < \mathrm{Re}[s_2] < 0$. 
For clarity of exposition, we postpone the proof of this lemma until after Theorem~\ref{HT_LST_Total_workload}, at which point we will have introduced all necessary notation.

We thus see that $g(s_2)$ is analytic inside the contour $E$, say $E^+$, except for $s_2=0$ which is a pole in $E^+$. The above problem now reduces to a Riemann-Hilbert problem with a pole, and with boundary $E$,  see \cite[Section I.3.3]{cohen2000boundary}.  
To transform it into a (standard) Riemann-Hilbert problem on the unit circle $D$,
we define $\phi$ (with inverse $\psi$) to be a conformal mapping of the interior of the unit circle $D$ onto the region bounded by $E$ with normalization conditions $\phi\left(-1\right)=\infty$, $\phi(0)=0$, and  $\phi\left(1\right)=-\nicefrac{3 c}{\mu}$. 
That allows us to translate the Riemann-Hilbert BVP on and inside $E$ to 
the following simple Riemann-Hilbert BVP with a pole (cf.\ \cite[Section I.3.3]{cohen2000boundary} and \cite[Section 6]{boxma1993compensation}):
Defining $h(w) := g(\phi(w))$, we obtain 
for $h(\cdot)$ on the unit circle $D$:
\begin{equation}\label{BVPJointHT}
\mathrm{Re} [h(w)] = 0, \, \, w \in  D \backslash \{0\},
\end{equation}
with $h(\cdot)$ analytic in $D^+\backslash \{0\}$ and continuous in $D^+\cup D\backslash \{0\}$.   The solution of the BVP \eqref{BVPJointHT} is, cf.\ \cite{boxma1993compensation} and \cite[Section I.3.3]{cohen2000boundary}:
\begin{equation}
h(w) = i  \alpha + \beta  w - \f{\bar{\beta } }{w}, \, \, w \in D^+\cup D\backslash \{0\},
\end{equation}
where $\alpha , \beta $ are constants that we will calculate explicitly in Theorem \ref{HT_LST_Total_workload}.
This determines $g(x) = h(\psi(x))$; substituting it in the above equation we obtain  
\begin{equation}
g(s_2) = i  \alpha + \beta  \psi(s_2) - \f{\bar{\beta } }{\psi(s_2)}, \, \, s_2 \in E^+\cup E\backslash\{0\},
\end{equation}
where $\psi(\cdot)$ is the conformal mapping from the parabola $E$ to the unit circle $D$. Since $g(s_2) = i {\tilde{\nu}^{\star}(s_2, s_2)}/{s_2}$, we obtain for $\mathrm{Re}[s_2] > - \nicefrac{3 c}{\mu}$,
\begin{equation}\label{v^{star}(s_2, s_2)}
\tilde{\nu}^{\star}(s_2, s_2) =   \alpha s_2 - i~\beta ~  \psi(s_2) s_2 + i \f{\bar{\beta } s_2}{\psi(s_2)}.
\end{equation}
With that we have calculated the LST of the total workload in heavy traffic, based on the conformal mapping $ \psi(s_2)$. Before materializing this in Theorem~\ref{HT_LST_Total_workload}, we give an explicit expression for $\psi(s_2)$ in the next lemma.
That will enable us to explicitly determine $\tilde{\nu}^{\star}(s_2, s_2)$ in Theorem~\ref{HT_LST_Total_workload}.

\begin{lemma}\label{lemma:ConformalMap}
For $z  \in \mathbb{C}$, we have a conformal map $\psi(z)$ which maps the interior of parabola \eqref{Parabola} onto the interior of the unit circle $D$, and it is given explicitly as follows:
\begin{equation}\label{ConformalMap}
\psi(z) = \f{1 - \sqrt{2}\cosh(\f{\pi}{4}\sqrt{\f{\mu}{c} z - 1})}{1 + \sqrt{2} \cosh(\f{\pi}{4}\sqrt{\f{\mu}{c} z - 1})}.
\end{equation}
\end{lemma}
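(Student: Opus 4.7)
The plan is to exhibit $\psi$ as the composition of four elementary conformal maps and to verify the normalization $\psi(\infty)=-1$, $\psi(0)=0$, $\psi(-3c/\mu)=1$ directly. First I would apply the affine change of variable $z\mapsto\zeta:=\tfrac{\mu z}{c}-1$, which is a conformal bijection sending the parabola \eqref{Parabola} onto the normalized parabola $V^{2}=16(U+4)$ in the $\zeta$-plane, with vertex at $-4$ and focus at $0$, and whose boundary admits the clean parametrization $\zeta=4(\alpha+i)^{2}$, $\alpha\in\mathbb{R}$.

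Next I would apply the map $F(\zeta):=\cosh\!\bigl(\tfrac{\pi}{4}\sqrt{\zeta}\bigr)$. The key observation is that although $\sqrt{\cdot}$ is two-valued, $\cosh$ is even, so $F$ is in fact an entire function of $\zeta$, with power series $\sum_{k\ge 0}\tfrac{(\pi/4)^{2k}}{(2k)!}\zeta^{k}$. To see that $F$ maps the interior of the normalized parabola conformally onto the right half-plane $\{\mathrm{Re}(w)>0\}$, I would factor it through the horizontal strip $S=\{\eta:|\mathrm{Im}(\eta)|<1\}$. A short calculation with $\eta=\alpha+i\beta$ shows that $4\eta^{2}$ lies in the interior of the normalized parabola iff $(1-\beta^{2})(1+\alpha^{2})>0$, i.e.\ iff $\eta\in S$; hence $\eta\mapsto 4\eta^{2}$ is a $2$-to-$1$ covering $S\to\mathrm{Int}(\mathcal{P}_\zeta)$ identifying $\eta$ with $-\eta$. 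The classical map $\eta\mapsto\cosh(\pi\eta/2)$ is a conformal bijection of $S$ onto $\{\mathrm{Re}(w)>0\}$ (its real part is $\cosh(\pi\alpha/2)\cos(\pi\beta/2)>0$ on $S$), and being even it descends through the quotient to a conformal bijection on $\mathrm{Int}(\mathcal{P}_\zeta)$, which is exactly $F$. The boundary correspondence $\zeta=4(\alpha+i)^{2}\mapsto\cosh(\pi(\alpha+i)/2)=i\sinh(\pi\alpha/2)\in i\mathbb{R}$ confirms that $\partial\mathcal{P}_\zeta$ maps bijectively onto the imaginary axis.

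Finally I would scale by $\sqrt{2}$ (preserving the right half-plane) and apply the Cayley transform $v\mapsto(1-v)/(1+v)$, the standard conformal map $\{\mathrm{Re}(v)>0\}\to D$ sending $0\mapsto 1$, $\infty\mapsto -1$, $1\mapsto 0$, and the imaginary axis onto the unit circle $\partial D$. Composing the four steps produces exactly the formula \eqref{ConformalMap}. The normalization is then direct: $z=-3c/\mu$ gives $\zeta=-4$, $F=\cosh(i\pi/2)=0$, hence $\psi=1$; $z=0$ gives $\zeta=-1$, $F=\cos(\pi/4)=1/\sqrt{2}$, so $\sqrt{2}F=1$ and $\psi=0$; and as $z\to\infty$, $F\to\infty$ so $\psi\to -1$.

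The one step that really needs care is the descent argument for $F$: one must justify that the $2$-to-$1$ squaring and the evenness of $\cosh$ combine cleanly to give a conformal bijection on $\mathrm{Int}(\mathcal{P}_\zeta)$ rather than merely on $S$. Everything else is a concatenation of textbook conformal maps together with the elementary arithmetic above.
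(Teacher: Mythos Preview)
Your approach is essentially the paper's: both exhibit $\psi$ as a composition of an affine normalisation, a $\cosh$-based map taking the parabola interior to a half-plane, and a M\"obius map to the disk. The paper uses the upper half-plane (carrying an extra factor of $i$) and simply cites Bieberbach for the middle step; you target the right half-plane and supply a self-contained argument via the strip $S$, and you also verify the three normalisations $\psi(-3c/\mu)=1$, $\psi(0)=0$, $\psi(\infty)=-1$ explicitly, which the paper does not.

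There is one slip in your strip argument. The map $\eta\mapsto\cosh(\pi\eta/2)$ is \emph{not} a conformal bijection of $S=\{|\mathrm{Im}\,\eta|<1\}$ onto the right half-plane: being even, it is $2$-to-$1$ there, identifying $\eta$ with $-\eta$, exactly like your squaring map $\eta\mapsto 4\eta^{2}$. That parallel is precisely what makes the descent work: both maps factor through the same quotient $S/\{\pm 1\}$, the squaring map inducing a conformal bijection $S/\{\pm1\}\to\mathrm{Int}(\mathcal P_\zeta)$ and the $\cosh$ map inducing a conformal bijection $S/\{\pm1\}\to\{\mathrm{Re}\,w>0\}$; composing the inverse of the first with the second yields $F$. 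As written, your two clauses (``is a conformal bijection of $S$'' and ``being even it descends'') are in tension --- a bijection cannot descend nontrivially. Once you state the $2$-to-$1$ nature of $\cosh(\pi\eta/2)$ on $S$ correctly, the descent argument is clean and the remainder of the proposal goes through.
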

\begin{proof}
The conformal mapping $\psi(z)$ is obtained by taking the composition of the following conformal mappings:
\begin{itemize}
\item[i.] The conformal mapping $\eta(z) = z - \f{c}{\mu}$, where $z = x + i y$, maps parabola $y^2 = \f{16c}{\mu} (x + \f{3c}{\mu})$ onto parabola $y^2 =  \f{16c}{\mu} (x + \f{4c}{\mu})$.
\item[ii.] From \cite[p.113]{bieberbach2000conformal}, we have that the conformal mapping $\xi(z) =  i  \cosh\left(\f{\pi}{4} \sqrt{\f{\mu}{c}z}\right)$ maps the interior of the parabola $y^2 =  \f{16c}{\mu} (x + \f{4c}{\mu})$ onto the interior of the upper half-plane  $\mathrm{Im}[\xi] > 0$.  
\item[iii.] As shown in \cite[p. 326, Equation (6)]{brown2009complex},  the conformal mapping $w(z) = \f{1 + i \sqrt{2}  z }{1 - i \sqrt{2} z}$ maps the upper half-plane (i.e., $\mathrm{Im}[z] > 0$ ) onto the interior of the unit circle $|w| <1$.
\end{itemize}

It follows from \cite[Theorem III]{bieberbach2000conformal}, that a composition of conformal mappings again is a conformal mapping. Hence the composition mapping $\psi(z)  := w(\xi(\eta(z)))$  conformally  maps the interior of the parabola \eqref{Parabola} onto the interior of the unit circle $D$. 
\end{proof}

Now we state the first main theorem of this section, in which we obtain an explicit expression  for the total stationary workload LST in heavy traffic. 

\begin{theorem}\label{HT_LST_Total_workload}
The scaled total workload LST in heavy traffic is given by, for $\mathrm{Re}[s] > -\nicefrac{3 c}{\mu}$,
\begin{equation}\label{Chap4:LST_Total_worload}
\lim\limits_{\rho \uparrow \nicefrac{1}{2}}  \E\left( e^{-s \left(\nicefrac{1}{2} - \rho\right)(V_1 +V_2) }\right ) = \f{\pi}{4}\f{\mu}{c} \f{s}{\cosh(\f{\pi}{2}\sqrt{\f{\mu}{c} s - 1})}.
\end{equation}
\end{theorem}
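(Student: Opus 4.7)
The plan is to take the explicit expression \eqref{v^{star}(s_2, s_2)} for $\tilde{\nu}^{\star}(s_2,s_2)$ as the starting point and pin down the three real unknowns hidden in $\alpha\in\mathbb{R}$ and $\beta\in\mathbb{C}$ by two probabilistic normalizations: (i) $\tilde{\nu}^{\star}(0,0)=1$, and (ii) the decay of $\tilde{\nu}^{\star}(s,s)$ as $s\to+\infty$. Once those constants are fixed, a half-angle identity for $\cosh$, combined with the explicit form of $\psi$ from Lemma~\ref{lemma:ConformalMap}, should collapse the resulting expression into the compact form \eqref{Chap4:LST_Total_worload}.

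For step (i), I would first evaluate $\psi$ and $\psi'$ at $s=0$. By Lemma~\ref{lemma:ConformalMap}, $\sqrt{\f{\mu}{c}s - 1}|_{s=0}=i$, and $\cosh(i\pi/4)=\cos(\pi/4)=\sqrt{2}/2$ gives $\psi(0)=0$; a direct differentiation yields $\psi'(0)=-\f{\pi\mu}{16c}$. In \eqref{v^{star}(s_2, s_2)} the two terms $\alpha s$ and $-i\beta\psi(s) s$ vanish at $s=0$, while $i\bar{\beta}s/\psi(s)\to i\bar{\beta}/\psi'(0)$; imposing $\tilde{\nu}^{\star}(0,0)=1$ then forces $\bar{\beta}=i\f{\pi\mu}{16c}$ and hence $\beta=-i\f{\pi\mu}{16c}$.

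For step (ii), the normalization $\phi(-1)=\infty$ in Lemma~\ref{lemma:ConformalMap} implies $\psi(s)\to -1$ as $s\to+\infty$ along the positive real axis. Expanding \eqref{v^{star}(s_2, s_2)} to leading order yields
\[
\tilde{\nu}^{\star}(s,s) = \bigl(\alpha + i\beta - i\bar{\beta}\bigr)\,s + O(1) = \bigl(\alpha - 2\,\mathrm{Im}\,\beta\bigr)\,s + O(1).
\]
Since $\tilde{\nu}^{\star}(s,s)$ is the LST of the nonnegative heavy-traffic limit of $(\nicefrac{1}{2}-\rho)(V_1+V_2)$, it stays bounded by $1$ for $s\geq 0$, so this linear coefficient must vanish, which gives $\alpha = 2\,\mathrm{Im}\,\beta = -\f{\pi\mu}{8c}$. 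I expect this step to be the main obstacle: it quietly uses that $\tilde{\nu}^{\star}$ does come from a genuine probability distribution and extends analytically into the strip $-\nicefrac{3c}{\mu}<\mathrm{Re}\,s<0$, which is precisely the content of the forthcoming Lemma~\ref{Chap4:Claim} on which all of the BVM machinery rests.

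Substituting $\alpha$, $\beta$, $\bar{\beta}$ back into \eqref{v^{star}(s_2, s_2)} gives
\[
\tilde{\nu}^{\star}(s,s) = -\f{\pi\mu s}{16c}\left[\psi(s) + 2 + \f{1}{\psi(s)}\right] = -\f{\pi\mu s}{16c}\cdot\f{(1+\psi(s))^2}{\psi(s)}.
\]
Writing $u=\f{\pi}{4}\sqrt{\f{\mu}{c}s - 1}$, Lemma~\ref{lemma:ConformalMap} gives $\sqrt{2}\cosh u = (1-\psi(s))/(1+\psi(s))$, so the identity $\cosh(2u)=2\cosh^2 u -1$ produces $\cosh(2u) = -4\psi(s)/(1+\psi(s))^2$. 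Inverting this yields $(1+\psi(s))^2/\psi(s) = -4/\cosh(\f{\pi}{2}\sqrt{\f{\mu}{c}s-1})$, and plugging this into the display above delivers exactly \eqref{Chap4:LST_Total_worload}.
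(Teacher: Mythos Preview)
Your proposal is correct and follows essentially the same route as the paper: determine $\bar\beta$ from $\tilde\nu^\star(0,0)=1$, determine $\alpha$ from the behaviour as $s\to\infty$ (the paper phrases this as $\tilde\nu^\star(\infty,\infty)=0$), and then collapse the resulting expression via the double-angle identity $\cosh(2u)=2\cosh^2 u-1$. Your intermediate factorisation $-\tfrac{\pi\mu s}{16c}\,(1+\psi(s))^2/\psi(s)$ is a slightly cleaner way of organising the same algebra the paper carries out after substituting $\psi$ explicitly; one minor remark is that your boundedness argument in step~(ii) only needs that $\tilde\nu^\star$ is a genuine LST on $\{s\ge 0\}$, so the forward reference to Lemma~\ref{Chap4:Claim} is not strictly required there (that lemma is needed earlier, to justify the BVM derivation of \eqref{v^{star}(s_2, s_2)} itself).
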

\begin{proof}
Substituting $\psi(z)$ from Lemma \ref{lemma:ConformalMap} in \eqref{v^{star}(s_2, s_2)} yields 
\begin{align}
\tilde{\nu}^{\star}(s, s) &= \alpha s -  i ~\beta ~\left(\f{1 - \sqrt{2}\cosh(\f{\pi}{4}\sqrt{\f{\mu}{c} s - 1})}{1 + \sqrt{2} \cosh(\f{\pi}{4}\sqrt{\f{\mu}{c} s - 1})}\right)s \nonumber \\ 
&\quad + i ~\bar{\beta }~\left(\f{1 + \sqrt{2}\cosh(\f{\pi}{4}\sqrt{\f{\mu}{c} s - 1})}{1 - \sqrt{2} \cosh(\f{\pi}{4}\sqrt{\f{\mu}{c} s - 1})}\right)s.
\end{align}
Since $\tilde{\nu}^{\star}(0, 0) = 1$, we obtain from the above equation $\bar{\beta } = \f{\pi}{16}\f{\mu}{c} i$, and since $\tilde{\nu}^{\star}(\infty, \infty) = 0$, we obtain $\alpha = -\f{\pi}{8}\f{\mu}{c}$. Substituting the values of  $\alpha $, $\beta $ and $\bar{\beta }$ into the above equation and thereafter simplifying it, we obtain 
\begin{align}
&\tilde{\nu}^{\star}(s, s) \nonumber \\ &= -\f{\pi}{8}\f{\mu}{c} \left[1 + \f{1}{2} \f{1 - \sqrt{2}\cosh(\f{\pi}{4}\sqrt{\f{\mu}{c} s - 1})}{1 + \sqrt{2} \cosh(\f{\pi}{4}\sqrt{\f{\mu}{c} s - 1})} + \f{1}{2} \f{1 + \sqrt{2}\cosh(\f{\pi}{4}\sqrt{\f{\mu}{c} s - 1})}{1 - \sqrt{2} \cosh(\f{\pi}{4}\sqrt{\f{\mu}{c} s - 1})}\right] s.
\label{Eq:38_IntermediateResultNU}
\end{align}
The theorem follows after some further simplifications and using  the trigonometric square formula $\cosh^2 x=\left(\cosh (2x)+1\right)/2$.
\end{proof}

It is now convenient to formulate and prove the postponed Lemma~\ref{Chap4:Claim}.
As we have discussed in Step A of Section \ref{Chap4:Jointworkload}, we are interested in finding the LST in the domain $\mathrm{Re}[s_2] \geq 0$.  In the previous theorem, we have calculated the LST expression $\tilde{\nu}^{\star}(s_2, s_2) $ in $\mathrm{Re}[s_2] > -\nicefrac{3 c}{\mu}$. We want to show that $\tilde{\nu}^{\star}(s_2, s_2) $ is analytic in the strip $-\nicefrac{3 c}{\mu} < \mathrm{Re}[s_2] < 0$. From \eqref{Chap4:LST_Total_worload}, we have an explicit expression and it is sufficient to show that the denominator $\cosh(\f{\pi}{2}\sqrt{\f{\mu}{c} s_2 - 1})$ has no zeros on that strip. 
\begin{lemma}\label{Chap4:Claim}
The LST of the total scaled workload in heavy traffic, as given in \eqref{Chap4:LST_Total_worload}, is analytic on the strip $-\nicefrac{3 c}{\mu} < \mathrm{Re}[s_2] < 0$.
\end{lemma}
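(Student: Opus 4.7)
The plan is to work directly with the closed-form expression \eqref{Chap4:LST_Total_worload}, reducing analyticity of the quotient on the strip to a location-of-zeros statement for the denominator. The key preliminary observation is that although $\sqrt{(\mu/c)s_2-1}$ is not single-valued, the composition $\cosh(\frac{\pi}{2}\sqrt{(\mu/c)s_2-1})$ \emph{is} single-valued and in fact an entire function of $s_2$. This is because $\cosh$ is an even function, so $\cosh(\frac{\pi}{2}\sqrt{z})$ admits a convergent power series in $z$ (obtained by re-indexing the Taylor series of $\cosh$), and this power series is entire in $z$; composing with the affine map $z = (\mu/c)s_2 - 1$ preserves entireness. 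Consequently the denominator in \eqref{Chap4:LST_Total_worload} is an entire function of $s_2$, and the only obstruction to analyticity of the whole expression on $\{-3c/\mu < \mathrm{Re}[s_2] < 0\}$ is the presence of zeros of this denominator in the strip.

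Next I would locate those zeros explicitly. Starting from $\cosh(w) = 0 \Leftrightarrow w \in i\pi(\mathbb{Z} + \tfrac{1}{2})$, setting $w = \frac{\pi}{2}\sqrt{(\mu/c)s_2 - 1}$ and squaring (which is legitimate since $\cosh(\tfrac{\pi}{2}\sqrt{z})$ depends only on $z = w^{2}\cdot 4/\pi^2$, so passing through the square-root is an honest equivalence) gives
\begin{equation*}
(\mu/c)s_2 - 1 = -(2k+1)^2, \qquad k \in \mathbb{Z}_{\geq 0},
\end{equation*}
so the zero set of the denominator, viewed as a subset of $\mathbb{C}$, is
\begin{equation*}
\Bigl\{\tfrac{c}{\mu}\bigl(1 - (2k+1)^2\bigr) : k \geq 0\Bigr\} = \Bigl\{0,\, -\tfrac{8c}{\mu},\, -\tfrac{24c}{\mu},\, -\tfrac{48c}{\mu},\, \ldots\Bigr\}.
\end{equation*}
In particular every zero is real and nonpositive, and the largest negative one is $-8c/\mu < -3c/\mu$; hence none has real part in the open interval $(-3c/\mu,\,0)$.

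It remains to verify that the zero at $s_2 = 0$ does not spoil the argument: although $s_2=0$ lies on the closed strip boundary, I would note for completeness that the denominator has a simple zero there, and the numerator factor $s$ in \eqref{Chap4:LST_Total_worload} provides a matching simple zero, so the quotient extends as an analytic function through $s_2 = 0$ (with value $1$, consistent with normalization). Combining the last two observations, the right-hand side of \eqref{Chap4:LST_Total_worload} is analytic throughout the open strip $-3c/\mu < \mathrm{Re}[s_2] < 0$, which is the claim of Lemma~\ref{Chap4:Claim}. The only subtle point in the whole argument is the initial entireness claim for $\cosh(\frac{\pi}{2}\sqrt{\,\cdot\,})$, which is what allows us to treat the multi-valued square root without branch-cut bookkeeping; everything else reduces to elementary arithmetic on $(2k+1)^2$.
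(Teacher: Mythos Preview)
Your proof is correct and follows essentially the same route as the paper's: establish that the denominator is analytic, locate all its zeros explicitly as $s_2 = \frac{c}{\mu}\bigl(1-(2k+1)^2\bigr)$, and observe that apart from the removable singularity at $s_2=0$ they all satisfy $s_2 \le -8c/\mu < -3c/\mu$. The only (minor) difference is that you justify analyticity of $\cosh\bigl(\tfrac{\pi}{2}\sqrt{\cdot}\bigr)$ directly via the even power series of $\cosh$, whereas the paper appeals to the conformal-mapping construction of Lemma~\ref{lemma:ConformalMap}; your argument is slightly more self-contained but otherwise equivalent.
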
 
\begin{proof}
In the proof of Lemma \ref{lemma:ConformalMap}, we observed that $\cosh(\f{\pi}{2}\sqrt{\f{\mu}{c} s_2 - 1})$ is a conformal mapping for $\mathrm{Re}[s_2] > -\nicefrac{3 c}{\mu}$, and hence it is an analytic function for $\mathrm{Re}[s_2] > -\nicefrac{3 c}{\mu}$. Moreover, the reciprocal of this analytic function is also analytic (see \cite[p. 74]{brown2009complex}) if the denominator has no zeros in that domain. To show that the denominator $\cosh(\f{\pi}{2}\sqrt{\f{\mu}{c} s_2 - 1})$ has no zeros in  $-\nicefrac{3 c}{\mu} < \mathrm{Re}[s_2] < 0$, we solve 
\begin{equation}
0 = \cosh(\f{\pi}{2}\sqrt{\f{\mu}{c} s_2 - 1}) = \f{e^{\f{\pi}{2}\sqrt{\f{\mu}{c} s_2 - 1}} + e^{-\f{\pi}{2}\sqrt{\f{\mu}{c} s_2 - 1}}}{2},
\end{equation}
so
\begin{equation}
e^{\f{\pi}{2}\sqrt{\f{\mu}{c} s_2 - 1}} =  e^{\pi i -\f{\pi}{2}\sqrt{\f{\mu}{c} s_2 - 1}},
\end{equation}
and hence  $\f{\pi}{2}\sqrt{\f{\mu}{c} s_2 - 1} = \pi i -\f{\pi}{2}\sqrt{\f{\mu}{c} s_2 - 1} + 2  \pi n i, ~ n \in \mathbb{Z}$. This implies that the zeros of the function $\cosh(\f{\pi}{2}\sqrt{\f{\mu}{c} s_2 - 1})$ are $s_2 = \f{c}{\mu}\left(1 - (2n + 1)^2\right), ~n \in \mathbb{Z}$. There are two different cases for the zeros we need to discuss: (i) when $n = 0$ or $n = -1$, we have $s_2  = 0$, and in this case we know $\tilde{\nu}^{\star}(s_2, s_2)$ is $1$. (ii) When $n \in \mathbb{Z} \backslash \{0, -1\}$, we have $s_2 = \f{c}{\mu} \left(1 - (2n + 1)^2\right) \leq - \nicefrac{8c}{\mu} < -\nicefrac{3 c}{\mu}$, which concludes the claim of the lemma. 

Note that instead of working directly with the roots appearing in the simplified Equation \eqref{ConformalMap}, one could consider the roots of the two denominators appearing in  Equation \eqref{Eq:38_IntermediateResultNU}, i.e., the zeros of $1\pm \sqrt{2}\cosh(\f{\pi}{4}\sqrt{\f{\mu}{c} s_2 - 1}) $.

Equivalently, one can prove the analytic continuation using  Euler's formula, cf. \cite[Equation (3.3)]{biane2001probability}, which converts the hyperbolic cosine into an infinite product (we use this approach to rewrite Equation \eqref{ConformalMap} as an infinite product expansion, cf. Equation \eqref{Eq:Prod_Expr_FPT_tildeC}). Using the infinite product expansion, it becomes evident that, in the domain $ \mathrm{Re}[s_2]>-\nicefrac{8 c}{\mu}$, there are no roots of the denominator.
\end{proof}

The LST of the total workload lends itself to explicitly determine the heavy-traffic stationary workload {\em distribution} as shown in the following Lemma:

\begin{lemma}\label{Chap4:Lemma:density_Total_workload}
With $f_{(\nicefrac{1}{2}- \rho)(V_1 +  V_2)}(\cdot)$ the probability density function of the scaled total workload $(\nicefrac{1}{2}- \rho)(V_1 +  V_2)$, we have
\begin{align}\label{Eq:Final_PDF_V1}
 \lim\limits_{\rho \uparrow \nicefrac{1}{2}}  f_{(\nicefrac{1}{2}- \rho)(V_1 +  V_2)}(x)  = 
\sum_{n=1}^\infty(-1)^{n+1}(2n+1)\frac{c}{\mu}\left((2n+1)^2-1\right) e^{-\frac{c}{\mu}\left((2n+1)^2-1\right)x},\ x>0.
\end{align}
Moreover, the limiting distribution as $\rho\uparrow 1/2$ of the scaled total workload $(\nicefrac{1}{2}- \rho)(V_1 +  V_2)$ is infinitely divisible and is distributed like
\begin{align}\label{Eq:ID_V1}
\sum_{n=1}^{\infty}\frac{\mathcal{E}_n}{\frac{c}{\mu}\left((2n+1)^2-1\right)},
\end{align}
where $\{\mathcal{E}_n\}_{n\in\mathbb{N}}$ is a sequence of independent and identically exponentially distributed random variables with rate 1. 
\end{lemma}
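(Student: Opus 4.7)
My approach is to invert the LST of Theorem~\ref{HT_LST_Total_workload},
\[
L(s)\;:=\;\frac{\pi}{4}\frac{\mu}{c}\frac{s}{\cosh\left(\frac{\pi}{2}\sqrt{\frac{\mu}{c}s-1}\right)},
\]
via the Weierstrass product for $\cosh$; this single manipulation will deliver \eqref{Eq:ID_V1} (and with it, infinite divisibility), and, through a subsequent residue expansion, the density series \eqref{Eq:Final_PDF_V1}.

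First, I apply $\cosh(z)=\prod_{n=0}^{\infty}\bigl(1+4z^2/(\pi^2(2n+1)^2)\bigr)$ with $z=\frac{\pi}{2}\sqrt{\mu s/c-1}$, noting that $4z^2/\pi^2=\mu s/c-1$. The factor $n=0$ equals $\mu s/c$ and cancels the $s$ in the numerator of $L(s)$. Setting $\lambda_n:=\frac{c}{\mu}((2n+1)^2-1)$ for $n\geq 1$, the remaining factors rearrange into $\prod_{n\geq 1}(1+s/\lambda_n)$ multiplied by a positive constant; that constant is pinned down to $1$ by the normalisation $L(0)=1$ (which encodes the Wallis-type identity $\prod_{n\geq 1}(2n+1)^2/((2n+1)^2-1)=4/\pi$). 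Hence
\[
L(s)\;=\;\prod_{n=1}^{\infty}\frac{\lambda_n}{\lambda_n+s},
\]
which is precisely the LST of $Y:=\sum_{n\geq 1}E_n$ with independent $E_n\sim\mathrm{Exp}(\lambda_n)$, equivalently of $Y=\sum_{n\geq 1}\mathcal{E}_n/\lambda_n$. Almost-sure convergence follows from $\sum_{n\geq 1}1/\lambda_n=\frac{\mu}{4c}\sum_{n\geq 1}(\tfrac{1}{n}-\tfrac{1}{n+1})=\frac{\mu}{4c}<\infty$; and each $E_n$ being a Gamma law (hence infinitely divisible), so is the infinite independent convolution $Y$. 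This establishes \eqref{Eq:ID_V1}.

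For the density, I use the residue expansion of $L$. Lemma~\ref{Chap4:Claim} localises the poles at $s=-\lambda_n$, $n\geq 1$, and they are simple. A direct calculation, based on $\sqrt{\mu s/c-1}=i(2n+1)$ and $\sinh\bigl(i\pi(2n+1)/2\bigr)=i(-1)^n$ at $s=-\lambda_n$, yields $\mathrm{Res}_{s=-\lambda_n}L(s)=(-1)^{n+1}(2n+1)\lambda_n$. Because $Y$ is absolutely continuous (as a convolution of exponentials with distinct rates), its density is recovered by Bromwich inversion of $L$. Shifting the contour past the first $N$ poles, the contour integral on $\mathrm{Re}(s)=-\lambda_{N+1}+\varepsilon$ vanishes as $N\to\infty$ thanks to the super-exponential growth of $\cosh$ along that line together with $\lambda_{N+1}\to\infty$, so only the sum of residues survives and
\[
f_Y(x)\;=\;\sum_{n=1}^{\infty}(-1)^{n+1}(2n+1)\lambda_n\,e^{-\lambda_n x},\qquad x>0,
\]
which is exactly \eqref{Eq:Final_PDF_V1}.

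The main obstacle I expect is the rigorous justification of termwise inversion, since a naive Mittag--Leffler expansion of $L$ gives rise to a constant-in-$s$ series that is only conditionally (Abel-)summable and would formally introduce a spurious atom at $0$. The product representation sidesteps this cleanly: once $L$ is identified as a bona fide LST of an absolutely continuous probability measure, the Bromwich inversion is standard, the alternating series in \eqref{Eq:Final_PDF_V1} converges at every $x>0$ due to the dominating exponential factor $e^{-\lambda_n x}$, and uniform growth bounds of $\cosh$ on the shifted contours control the error term in the residue expansion.
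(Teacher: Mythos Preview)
Your derivation of the product representation $L(s)=\prod_{n\ge1}\lambda_n/(\lambda_n+s)$ and hence of \eqref{Eq:ID_V1} (with infinite divisibility) via the Euler/Weierstrass product for $\cosh$ is exactly what the paper does.

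For the density \eqref{Eq:Final_PDF_V1}, however, the paper proceeds differently. Rather than Bromwich inversion, it truncates the product at level $N$, carries out an exact partial-fraction decomposition (the coefficient of $\lambda_n/(\lambda_n+s)$ in $\prod_{k=1}^N \lambda_k/(\lambda_k+s)$ works out to $(-1)^{n+1}(2n+1)\frac{N!(N+1)!}{(N-n)!(N+n+1)!}$), inverts each exponential LST termwise, and then sends $N\to\infty$ via Dominated Convergence; the factors $\frac{N!(N+1)!}{(N-n)!(N+n+1)!}\le 1$ supply the required domination. This is purely real-analytic and avoids any contour estimates. Your residue route is shorter and gives the same series, but the step ``the contour integral on $\mathrm{Re}(s)=-\lambda_{N+1}+\varepsilon$ vanishes'' is not yet justified as stated: that vertical line passes within $\varepsilon$ of the pole at $-\lambda_{N+1}$, so $|L|$ blows up like $N^3/\varepsilon$ near $t=0$, and one must either quantify the competition with $e^{-\lambda_{N+1}x}$ or, more cleanly, shift instead to $\mathrm{Re}(s)=-\tfrac{c}{\mu}((2N+2)^2-1)$, where $\sqrt{\mu s/c-1}=i(2N+2)$ at $t=0$ and $|\cosh|\ge 1$ along the whole line, making the remainder estimate straightforward. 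With that adjustment your argument goes through; the paper's truncation-plus-DCT approach trades this contour bookkeeping for an explicit combinatorial identity.
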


The infinite divisibility of the scaled total workload distribution is a consequence of the infinite divisibility of the exponential distribution. 

Before proceeding with the proof of Lemma \ref{Chap4:Lemma:density_Total_workload}, we review the needed relevant results in the remark below. 

\begin{remark}\label{Remark:ID_PDF_FPT}
To compute the limiting probability density function of the scaled total workload in heavy traffic, we need to invert the LST \eqref{Chap4:LST_Total_worload}. The appearance of LSTs with a hyperbolic cosine and their probabilistic interpretation has a long standing tradition in probability theory, see, e.g., \cite{biane2001probability} and the references therein. As we shall need these results for the proof of Lemma \ref{Chap4:Lemma:density_Total_workload}, we review them shortly below.

Consider a random variable defined as
\begin{align}\label{Eq:ID_FPT1}
C=\frac{2}{\pi^2}\sum_{n=1}^{\infty}\frac{\mathcal{E}_n}{(n-\nicefrac{1}{2})^2}
\end{align}
with $\{\mathcal{E}_n\}_{n\in\mathbb{N}}$ a sequence of independent and identically exponentially distributed random variables with rate 1. Then,
\begin{align}\label{Eq:DerivationsLST_jD_FPT}
\E\left(e^{-sC}\right)=\E\left(\prod_{n=1}^{\infty}e^{-\frac{2s}{\pi^2 (n-\nicefrac{1}{2})^2}\mathcal{E}_n}\right)=\frac{1}{\prod_{n=1}^{\infty}\left(1+\frac{2s}{\pi^2 (n-\nicefrac{1}{2})^2}\right)}=\frac{1}{\cosh \sqrt{2s}},
\end{align}
where the last equality is known as Euler's formula, cf. Equation (3.3) in \cite{biane2001probability}. Moreover, using the Mittag-Leffler expansion, based on the poles of the r.h.s. of Equation \eqref{Eq:DerivationsLST_jD_FPT}, yields
\begin{align*}
\E\left(e^{-sC}\right)=\pi\sum_{n=1}^\infty \frac{(-1)^n(n-\nicefrac{1}{2})}{s+(n-\nicefrac{1}{2})^2\pi^2/2},
\end{align*}
cf.  \cite[Equation (2.21)]{Taylor_1962}. Noting that 
$\frac{1}{s+(n-\nicefrac{1}{2})^2\pi^2/2}=\int_{x=0}^{\infty} e^{-sx}e^{- (n-\nicefrac{1}{2})^2\pi^2x/2}\mathrm{d}x$, this last expression yields the density function of the random variable $C$, more concretely
\begin{align}\label{Eq:pdf_C2}
f_{C}(x)=\pi \sum_{n=1}^\infty (-1)^n (n-\nicefrac{1}{2})e^{-(n-\nicefrac{1}{2})^2\pi^2 x/2},\ x>0.
\end{align}
Moreover, equivalent expressions to \eqref{Eq:pdf_C2} can be produced using the reciprocal relation $f_C(x)=\left(\frac{2}{\pi x}\right)^{3/2}f_C\left(\frac{4}{\pi^2 x}\right)$, cf. \cite[Table 1 (continued) Row 5]{biane2001probability}. This immediately implies that 
\begin{align*}
f_{C}(x)=\sqrt{\frac{2}{\pi x^3}}\sum_{n=1}^\infty (-1)^n (2n-1)e^{-(2n-1)^2/2x},\ x>0,
\end{align*}
see \cite[Equation (3.11)]{biane2001probability}. 

As stated in \cite[page 441]{Taylor_1962}, this turns out to be the density for the maximum displacement of a one-dimensional standard Brownian motion in a fixed time interval or, as stated in \cite[Table 2 Row 3]{biane2001probability}, the density of the hitting time of $1$ of the one-dimensional standard Brownian motion with reflection at $0$.

To further understand the infinite divisibility of the hitting time, the interested reader is referred to \cite[page 550]{Feller}, where the idea relies on the fact that the  hitting time from 0 to 1 can be divided into the hitting time from 0 to any point in the interval $(0,1)$ plus the independent (by the strong Markov property) hitting time from that point to $1$. By putting more and more points between $0$ and $1$, the hitting time can be expressed as the limit of a null triangular array, hence giving rise to the infinite divisibility property expressed  in \eqref{Eq:ID_FPT1}.

A similar approach can be applied for the random hitting time of a one-dimensional standard Brownian motion with drift $\mu\geq 0$ to $\{\pm 1\}$, say $C'$. As shown in \cite[Theorem 7.1]{Kent_1978}, the random hitting time has the following representation 
\begin{align}\label{Eq:ID_FPT2}
C'=2\sum_{n=1}^{\infty}\frac{\mathcal{E}_n}{\mu^2+\pi^2(n-\nicefrac{1}{2})^2},
\end{align}
and it is shown, by performing the same computations as in \eqref{Eq:DerivationsLST_jD_FPT}, to have the following LST
\begin{align*}
\E\left(e^{-sC'}\right)=\frac{\cosh \mu}{\cosh \sqrt{2s+\mu^2}}.
\end{align*}
\end{remark}

\begin{proof}[Proof of Lemma \ref{Chap4:Lemma:density_Total_workload}] We  express the LST  \eqref{Chap4:LST_Total_worload} as an infinite product of LST of independent exponentially distributed random variables. To this purpose, we need the following two identities
\begin{align}\label{Eq:ML_Expans}
\frac{1}{\cosh \sqrt{s}}=\prod_{n=1}^\infty \left(1+\frac{s}{\pi^2(n-\nicefrac{1}{2})^2}\right),
\end{align}
which is  Euler's formula, cf. \cite[Equation (3.3)]{biane2001probability}. 
Moreover, 
\begin{align*}
\cos \pi s =\prod_{n=0}^\infty  \left(1-\left(\frac{s}{n+\nicefrac{1}{2}}\right)^2\right).
\end{align*}
From this last equation, by taking out the $n=0$ term, we can show that
\begin{align}\label{Eq:Cos_Zero}
\prod_{n=1}^\infty  \left(1-\left(\frac{\nicefrac{1}{2}}{n+\nicefrac{1}{2}}\right)^2\right)=\lim_{s\to1/2}\frac{\cos \pi s}{1-4s^2}=\frac{\pi}{4}.
\end{align}
Using \eqref{Eq:ML_Expans} and \eqref{Eq:Cos_Zero}, yields after straightforward computations that
\begin{align}
\f{\pi}{4}\f{\mu}{c} \f{s}{\cosh(\f{\pi}{2}\sqrt{\f{\mu}{c} s - 1})} &= \frac{\mu s}{c}\frac{\prod_{n=1}^\infty  \left(1-\left(\frac{\nicefrac{1}{2}}{n+\nicefrac{1}{2}}\right)^2\right)}{\prod_{n=1}^\infty \left(1+\frac{
\frac{\pi^2 \mu s}{4c}-\frac{\pi^2}{4}}{\pi^2(n-\nicefrac{1}{2})^2}\right)}\nonumber\\
&= \frac{\prod_{n=2}^\infty  \left(1-\left(\frac{\nicefrac{1}{2}}{n-1/2}\right)^2\right)}{\prod_{n=2}^\infty \left(1+\frac{\frac{ \mu s}{4c}-\frac{1}{4}}{(n-\nicefrac{1}{2})^2}\right)}\nonumber\\
&=\prod_{n=1}^\infty\frac{\frac{c}{\mu}\left((2n+1)^2-1\right)}{s+\frac{c}{\mu}\left((2n+1)^2-1\right)}.\label{Eq:Prod_Expr_FPT_tildeC}
\end{align}
Note that the last equality reveals that the LST at hand is associated with the random variable of Equation \eqref{Eq:ID_V1}. For convenience, we shall denote the random variable of Equation \eqref{Eq:ID_V1} by $\tilde{C}$.

We now turn our attention to the computation of the density function. Note that the conventional approach to produce the density function (based on a meromorphic expansion) doesn't work as the corresponding (meromorphic) series diverges. We shall overcome this following the approach of \cite{Taylor_1962}. More concretely, we consider, for $N>0$, 
\begin{align}\label{Eq:Mer_Expa_Sum_FPT}
\prod_{n=1}^N\frac{\frac{c}{\mu}\left((2n+1)^2-1\right)}{s+\frac{c}{\mu}\left((2n+1)^2-1\right)}&=
\sum_{n=1}^N\frac{\frac{c}{\mu}\left((2n+1)^2-1\right)}{s+\frac{c}{\mu}\left((2n+1)^2-1\right)}
\prod\limits_{k=1,k\neq n}^N
\frac{k(k+1)}{k(k+1)-n(n+1)}\nonumber\\
&=\sum_{n=1}^N\frac{\frac{c}{\mu}\left((2n+1)^2-1\right)}{s+\frac{c}{\mu}\left((2n+1)^2-1\right)}
\frac{(-1)^{n+1} (2 n+1) N!  (N+1)!}{(N-n)!  (N+n+1)!}\nonumber\\
&=\sum_{n=1}^\infty(-1)^{n+1} (2 n+1)\frac{\frac{c}{\mu}\left((2n+1)^2-1\right)}{s+\frac{c}{\mu}\left((2n+1)^2-1\right)}\nonumber\\
&\quad\quad\quad\times\frac{ (N-n+1)\cdots N }{(N+2)\cdots (N+n+1)}\mathds{1}_{\{n\leq N\}}
,
\end{align}
by taking partial fractions and noting that $(2n+1)^2-1=4n(n+1)$. Note that, as $N\to\infty$, the l.h.s. of Equation \eqref{Eq:Mer_Expa_Sum_FPT} converges to \eqref{Eq:Prod_Expr_FPT_tildeC}.
The Laplace transform on the r.h.s. of Equation \eqref{Eq:Mer_Expa_Sum_FPT} can be easily inverted, from which we obtain that the density function of \eqref{Eq:Prod_Expr_FPT_tildeC} is given by 
\begin{align}\label{Eq:PDF_Sum_FPT}
\lim_{N\to\infty}
\sum_{n=1}^\infty & (-1)^{n+1} (2 n+1) \frac{c}{\mu}\left((2n+1)^2-1\right) e^{-\frac{c}{\mu}\left((2n+1)^2-1\right)x}\nonumber\\
&\times\frac{ (N-n+1)\cdots N }{(N+2)\cdots (N+n+1)}\mathds{1}_{\{n\leq N\}},\ x>0.
\end{align}
Applying the Dominated Convergence Theorem immediately yields Equation \eqref{Eq:Final_PDF_V1} as the terms (with respect to $N$) inside the series are bounded
\begin{align*}
\frac{ (N-n+1)\cdots N }{(N+2)\cdots (N+n+1)}\leq 1,\ \forall \ 1\leq n\leq N,
\end{align*}
as
\begin{align*}
\lim_{N\to\infty}&
(-1)^{n+1} (2 n+1) \frac{c}{\mu}\left((2n+1)^2-1\right) e^{-\frac{c}{\mu}\left((2n+1)^2-1\right)x}
\frac{ (N-n+1)\cdots N }{(N+2)\cdots (N+n+1)}\mathds{1}_{\{n\leq N\}}\\
&=(-1)^{n+1} (2 n+1) \frac{c}{\mu}\left((2n+1)^2-1\right) e^{-\frac{c}{\mu}\left((2n+1)^2-1\right)x},
\end{align*}
and the series 
\begin{align*}
\sum_{n=1}^\infty & (-1)^{n+1} (2 n+1) \frac{c}{\mu}\left((2n+1)^2-1\right) e^{-\frac{c}{\mu}\left((2n+1)^2-1\right)x}
\end{align*}
converges for $x>0$.
From  \cite[Theorem 8.2, page 60]{9}, it follows that \eqref{Eq:Final_PDF_V1} is indeed the density function in question. This is intuitively validated by noting that, if we were to directly use the Mittag-Leffler expansion, based on the poles of the r.h.s. of Equation \eqref{Eq:Prod_Expr_FPT_tildeC}  this would  yield
\begin{align*}
\f{\pi}{4}\f{\mu}{c} \f{s}{\cosh(\f{\pi}{2}\sqrt{\f{\mu}{c} s - 1})} &=\sum_{n=1}^\infty  (-1)^{n+1}(2n+1)\frac{\frac{c}{\mu}\left((2n+1)^2-1\right)}{s+\frac{c}{\mu}\left((2n+1)^2-1\right)}.
\end{align*}
However the r.h.s. of the above equation does not converge, but still yields the same result as in \eqref{Eq:Final_PDF_V1}. This was noticed and commented upon on \cite[page 441]{Taylor_1962}.
\end{proof}

We now state the most important result of this section.
In \eqref{Chap4:LST_Total_worload}, we have computed an explicit expression for the scaled {\em total} workload LST in heavy traffic. In the following theorem, we give an explicit expression for the scaled {\em joint} workload LST in heavy traffic.

\begin{theorem}\label{thm:htjoint}
For $\mathrm{Re}[s_j] > - \nicefrac{3c}{\mu}$, $j=1,2$, the scaled joint workload LST in heavy traffic is given by:
\begin{equation}\label{v^{star}(s_1, s_2)}
\lim\limits_{\rho \uparrow \nicefrac{1}{2}}  \E(e^{-s_1\left(\nicefrac{1}{2} - \rho\right)V_1 -s_2\left(\nicefrac{1}{2} - \rho\right)V_2 }) =  \f{\pi}{4}\f{\mu}{c} \f{s_1 s_2}{\tilde{k}^{\star}(s_1, s_2)}\left[ \f{1}{\cosh(\f{\pi}{2}\sqrt{\f{\mu}{c} s_1 - 1})} + \f{1}{\cosh(\f{\pi}{2}\sqrt{\f{\mu}{c} s_2 - 1})} \right],
\end{equation}
where $\tilde{k}^{\star}(s_1, s_2) = s_1   + s_2 + \frac{\mu}{8c}\big(s_1 - s_2\big)^2$.
\end{theorem}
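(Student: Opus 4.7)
The plan is to simply combine the scaled functional equation~\eqref{Scaling2HT1} with the closed-form diagonal LST established in Theorem~\ref{HT_LST_Total_workload}. Concretely, rearranging~\eqref{Scaling2HT1} gives
\begin{equation*}
\tilde{\nu}^{\star}(s_1, s_2) \;=\; \frac{s_1 s_2}{\tilde{k}^{\star}(s_1, s_2)}\left[\frac{\tilde{\nu}^{\star}(s_1, s_1)}{s_1} + \frac{\tilde{\nu}^{\star}(s_2, s_2)}{s_2}\right],
\end{equation*}
so the only task is to substitute the two diagonal terms. Theorem~\ref{HT_LST_Total_workload} yields
$\tilde{\nu}^{\star}(s_j, s_j)/s_j = (\pi\mu)/(4c)\cdot 1/\cosh\bigl(\tfrac{\pi}{2}\sqrt{\tfrac{\mu}{c}s_j-1}\bigr)$ for $j=1,2$, and plugging this in produces the claimed expression~\eqref{v^{star}(s_1, s_2)} immediately.

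First I would verify that this identification is valid on the stated domain $\mathrm{Re}[s_j] > -3c/\mu$. By Lemma~\ref{Chap4:Claim}, each factor $1/\cosh\bigl(\tfrac{\pi}{2}\sqrt{\tfrac{\mu}{c}s_j-1}\bigr)$ is analytic on the strip $\mathrm{Re}[s_j] > -3c/\mu$ (and in particular for $\mathrm{Re}[s_j]\geq 0$), so the right-hand side extends analytically throughout the product domain. The original functional equation~\eqref{Scaling2HT1} holds wherever all three LSTs are defined and analytic, i.e., at least for $\mathrm{Re}[s_1], \mathrm{Re}[s_2]\geq 0$, and by analytic continuation both sides agree on the larger domain.

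Next, the only subtle point I would address is the apparent singularity on the zero set of the kernel $\tilde{k}^{\star}(s_1,s_2)$. On any such zero pair, the bracket on the right of the rearranged identity must vanish simultaneously; indeed, this is exactly the boundary condition used in Step A of the BVM of Section~\ref{Chap4:HT_Analysis}, so that $\tilde{k}^{\star}(s_1,s_2)$ divides the bracket and the quotient is entire in a neighborhood of these zeros. This guarantees that the formula on the right of~\eqref{v^{star}(s_1, s_2)} is analytic (with removable singularities at the kernel zeros) and therefore defines the correct LST throughout the domain.

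I do not expect any computational obstacle here, since once Theorem~\ref{HT_LST_Total_workload} and Lemma~\ref{Chap4:Claim} are in hand, the result is a direct substitution. The only conceptual care needed is the argument in the previous paragraph explaining why the division by $\tilde{k}^{\star}(s_1,s_2)$ is harmless. Finally, one should check the normalization $\tilde{\nu}^{\star}(0,0)=1$ as a sanity check: both bracketed terms tend to $\tfrac{\pi}{4}\tfrac{\mu}{c}$ as $s_1,s_2\to 0$, giving a numerator $\tfrac{\pi}{2}\tfrac{\mu}{c}s_1 s_2$, while $\tilde{k}^{\star}(s_1,s_2)\sim s_1+s_2$, which is consistent with $\tilde{\nu}^{\star}\to 1$ along any ray through the origin after using L'Hôpital or the symmetric scaling $s_1=s_2=s\to 0$.
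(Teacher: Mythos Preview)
Your approach is exactly the paper's: the paper's entire proof of this theorem is a single sentence stating that one substitutes $\tilde{\nu}^{\star}(s_j,s_j)$ from Theorem~\ref{HT_LST_Total_workload} into the functional equation~\eqref{Scaling2HT1}. Your additional remarks on analyticity and removability of the kernel zeros are fine elaborations, but note that your final sanity check is miscomputed: at $s_j=0$ one has $\cosh\!\bigl(\tfrac{\pi}{2}\sqrt{-1}\bigr)=\cos(\tfrac{\pi}{2})=0$, so each bracketed term \emph{diverges} rather than tending to $\tfrac{\pi}{4}\tfrac{\mu}{c}$; the correct check is simply that along $s_1=s_2=s$ the formula collapses to $\tfrac{\pi}{4}\tfrac{\mu}{c}\,s/\cosh(\cdot)$, which is $\tilde{\nu}^{\star}(s,s)$ and equals $1$ at $s=0$ by Theorem~\ref{HT_LST_Total_workload}.
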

\begin{proof}
By substituting $\tilde{\nu}^{\star}(s_j, s_j)$, $j = 1, 2$ (obtained from the LST \eqref{Chap4:LST_Total_worload}) into Equation \eqref{Scaling2HT1} we obtain $\tilde{\nu}^{\star}(s_1, s_2)$.
\end{proof}

\begin{remark}
Notice that letting $s_2 \to 0$ in \eqref{v^{star}(s_1, s_2)}, the r.h.s. tends to $\f{s_1}{\tilde{k}^{\star}(s_1, 0)} = \f{1}{1 +  \f{\mu}{8c} s_1}$, which is the heavy-traffic limit LST of the marginal workload
as given in Lemma~\ref{Chap4:MarginalHT}.
\end{remark} 

As a corollary we compute the first and second stationary moments of the joint workload in heavy traffic.
\begin{corollary}
\label{cor:Chap4:moments}
For $j = 1, 2$, it holds that
\begin{align*}
\E\left({\rm lim}_{\rho \uparrow 1/2} \left(\nicefrac{1}{2} - \rho\right) V_j\right) &=\frac{\mu}{8 c},  \\
\E\left({\rm lim}_{\rho \uparrow 1/2} \left(\nicefrac{1}{2} - \rho\right)^2 V_j^2\right) &=\frac{\mu^2}{32 c^2}, \\
 \E\left({\rm lim}_{\rho \uparrow 1/2}\left(\nicefrac{1}{2} - \rho\right)^2 V_1 V_2\right) &= \frac{\mu^2}{32 c^2} \f{\pi^2 - 9}{3},  \\
 \mathbb{R}\left({\rm lim}_{\rho \uparrow 1/2}\left(\left(\nicefrac{1}{2} - \rho\right)V_1, \left(\nicefrac{1}{2} - \rho\right)V_2\right) \right) &= \f{2}{3} \pi^2 - 7  \approx -0.4203,
\end{align*}
where $\mathbb{R}(\cdot, \cdot)$ is the correlation coefficient.
\end{corollary}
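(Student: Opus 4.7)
My plan is to reduce everything to one nontrivial computation involving the total workload in heavy traffic, and then derive each moment by symmetry.

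\medskip\noindent\textbf{Marginal moments.} The first two equalities require essentially no work beyond quoting Lemma~\ref{Chap4:MarginalHT}: in the symmetric regime ($c_1=c_2=c$, $\mu_1=\mu$), the scaled marginal workload $(\nicefrac{1}{2}-\rho)V_j$ converges in distribution to an exponential random variable with mean $\nicefrac{\mu}{8c}$. The first and second moments of $\text{exp}(\nicefrac{8c}{\mu})$ are $\nicefrac{\mu}{8c}$ and $\nicefrac{\mu^2}{32c^2}$ respectively, delivering the first two claims of the corollary.

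\medskip\noindent\textbf{Cross moment via the total workload.} The convenient route to $\E[XY]$, where $X=\lim(\nicefrac{1}{2}-\rho)V_1$ and $Y=\lim(\nicefrac{1}{2}-\rho)V_2$, is through the identity $\E[XY]=\tfrac{1}{2}\bigl(\E[(X+Y)^2]-\E[X^2]-\E[Y^2]\bigr)$, using the known distribution of $X+Y$. Applying Lemma~\ref{Chap4:Lemma:density_Total_workload}, the density of $X+Y$ is an alternating exponential mixture with rates $a_n=\tfrac{4cn(n+1)}{\mu}$, so the second moment becomes
\begin{equation*}
\E[(X+Y)^2]=2\sum_{n=1}^\infty (-1)^{n+1}\frac{2n+1}{a_n^2}=\frac{\mu^2}{8c^2}\sum_{n=1}^\infty (-1)^{n+1}\frac{2n+1}{n^2(n+1)^2}.
\end{equation*}
The partial fraction identity $\tfrac{2n+1}{n^2(n+1)^2}=\tfrac{1}{n^2}-\tfrac{1}{(n+1)^2}$ reduces the series to two alternating sums, each of which evaluates in closed form via $\sum_{n\ge1}(-1)^{n+1}n^{-2}=\nicefrac{\pi^2}{12}$; the telescoping leaves $\sum_{n\ge1}(-1)^{n+1}(2n+1)/(n^2(n+1)^2)=\nicefrac{\pi^2}{6}-1$. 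Substituting back yields $\E[(X+Y)^2]=\tfrac{\mu^2}{8c^2}(\nicefrac{\pi^2}{6}-1)$, and combined with $\E[X^2]+\E[Y^2]=\nicefrac{\mu^2}{16c^2}$ this produces precisely
\begin{equation*}
\E[XY]=\frac{1}{2}\Bigl(\frac{\mu^2}{8c^2}(\nicefrac{\pi^2}{6}-1)-\frac{\mu^2}{16c^2}\Bigr)=\frac{\mu^2}{32c^2}\cdot\frac{\pi^2-9}{3}.
\end{equation*}

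\medskip\noindent\textbf{Correlation coefficient.} With the variances $\Var(X)=\Var(Y)=\nicefrac{\mu^2}{32c^2}-\nicefrac{\mu^2}{64c^2}=\nicefrac{\mu^2}{64c^2}$ in hand, the correlation is
\begin{equation*}
\Corr(X,Y)=\frac{\E[XY]-\E[X]\E[Y]}{\sqrt{\Var(X)\Var(Y)}}=\frac{64c^2}{\mu^2}\Bigl(\frac{\mu^2(\pi^2-9)}{96c^2}-\frac{\mu^2}{64c^2}\Bigr),
\end{equation*}
which simplifies to $\tfrac{2}{3}\pi^2-7\approx-0.4203$, completing the corollary.

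\medskip\noindent\textbf{Main obstacle.} The only nonroutine step is the evaluation of the alternating series for $\E[(X+Y)^2]$. The hazard is that an uninformed partial-fraction decomposition of $(2n+1)/(n^2(n+1)^2)$ can produce terms whose alternating sums diverge individually; spotting the clean telescoping $\tfrac{1}{n^2}-\tfrac{1}{(n+1)^2}$, which pairs with the Dirichlet $\eta(2)=\pi^2/12$ identity, is what makes the computation elementary. (Alternatively, one can differentiate the LST in Theorem~\ref{HT_LST_Total_workload} twice at $s=0$ using the asymptotic expansion $\cosh(\tfrac{\pi}{2}\sqrt{\tfrac{\mu}{c}s-1})=\tfrac{\pi\mu}{4c}s-\tfrac{\pi^3\mu^2}{96c^2}\cdot\tfrac{1}{s}\cdot\ldots$; but the density approach is cleaner because the alternating sum has a transparent closed form.)
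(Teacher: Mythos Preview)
Your argument is correct, but it follows a different path from the paper's for the cross moment. The paper obtains the marginal moments exactly as you do (via Lemma~\ref{Chap4:MarginalHT}), and then states that $\E\bigl[\lim_{\rho\uparrow 1/2}(\nicefrac{1}{2}-\rho)^2 V_1V_2\bigr]$ is computed by differentiating the two-variable LST of Theorem~\ref{thm:htjoint} with respect to $s_1$ and $s_2$. In parallel, the paper computes the moments of the total workload $\tilde C$ not from the density but from the infinite-divisibility representation \eqref{Eq:ID_V1}, $\tilde C=\sum_{n\ge1}\mathcal{E}_n/a_n$, which gives directly $\E\tilde C=\sum 1/a_n$ and $\mathbb{V}\mathrm{ar}(\tilde C)=\sum 1/a_n^2$; these are positive, non-alternating series and require only the elementary evaluation $\sum_{n\ge1}1/(n(n+1))^2=(\pi^2-9)/3$. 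Your route instead integrates $x^2$ against the alternating density of Lemma~\ref{Chap4:Lemma:density_Total_workload} and then recovers $\E[XY]$ from $\E[(X+Y)^2]$ and the marginals via the polarization identity. The trade-off is clear: you avoid the somewhat tedious double differentiation of the joint LST in \eqref{v^{star}(s_1, s_2)}, at the cost of handling an alternating series (which you resolve neatly with the telescoping $\tfrac{2n+1}{n^2(n+1)^2}=\tfrac{1}{n^2}-\tfrac{1}{(n+1)^2}$ and $\eta(2)=\pi^2/12$). The paper's ID-based computation of $\mathbb{V}\mathrm{ar}(\tilde C)$ is marginally cleaner, but your approach is self-contained and arguably more transparent since it never touches the two-variable transform.
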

\begin{proof}
The marginal moments of the workload $V_j, j = 1, 2$ in heavy traffic are computed directly from Lemma \ref{Chap4:MarginalHT}. Equivalently,
Expression \eqref{Eq:ID_V1} can be used to compute the moments, namely
\begin{align*}
\E(\tilde{C})&=\sum_{n=1}^{\infty}\frac{1}{\frac{c}{\mu}\left((2n+1)^2-1\right)}=\frac{\mu}{4c},\\
\mathbb{V}\mathrm{ar}(\tilde{C})&=\sum_{n=1}^{\infty}\frac{1}{\frac{c^2}{\mu^2}\left((2n+1)^2-1\right)^2}=\frac{\mu^2}{16c^2}\frac{\pi^2-9}{3}
\end{align*}
with $\tilde{C}$ denoting the scaled total workload $\lim\limits_{\rho \uparrow \nicefrac{1}{2}} (\nicefrac{1}{2}- \rho)(V_1 +  V_2)$.
The joint  moment of $\lim\limits_{\rho \uparrow \nicefrac{1}{2}} (\nicefrac{1}{2} - \rho)^2 V_1 V_2$ is computed by differentiating the LST expression \eqref{v^{star}(s_1, s_2)} w.r.t. $s_1$ and $s_2$. 
%
\end{proof}

\section{Numerical results}\label{Chap4:Numerical_results}
In this section, we verify the obtained heavy-traffic results via simulations. Note that there are situations where simulation is not very efficient, and one such scenario appears in the heavy-traffic analysis of queueing models;
see, e.g., \cite{asmussen1992queueing}. Here it has been noted repeatedly that the standard simulation methods do not perform satisfactorily, one main problem being that the run lengths need to be exceedingly large to obtain even moderate precision. We have conducted simulations to validate our findings. One expects that as $\rho \uparrow \nicefrac{1}{2}$, the correlation coefficient tends to the exact correlation coefficient  $\mathbb{R}\left({\rm lim}_{\rho \uparrow 1/2}\left((\nicefrac{1}{2} - \rho)V_1,(\nicefrac{1}{2} - \rho)V_2\right)\right) = -0.4203$. For the parameters $c = 0.1$ and $\rho = 0.49$, we perform $1000$ batches of MaxTime ($2\times10^7$) simulations and calculate the correlation coefficient, the lower limit (LL), and the upper limit (UL) of the  $95\%$ confidence interval using the $1000$ samples of the correlation coefficients. The  runtime of each simulation takes approximately $2$ hours.

\begin{table}[h!]
\begin{center}
\scalebox{0.9}{
\begin{tabular}{ c | c | c |c}
~&Number of Runs $= 1000$, ~  MaxTime $ = 2 \times 10^7$   \\ 
\hline 
\begin{tabular}{c  }
$\rho$  \\
\hline
$0.2$  \\ $0.4$  \\  $0.47$ \\ $0.49$   \end{tabular} &  \begin{tabular}{c | c | c}
Confidence Interval LL & Simulated  $\mathbb{R}(V_1, V_2)$  & Confidence Interval UL  \\
\hline
$ -0.3954$ &  $-0.3954$ & $-0.3954$     \\
$ -0.4185$ &  $-0.4184$ & $-0.4184$ \\
$-0.4202$ &  $-0.4200$  & $-0.4200$     \\
$-0.4213$ &  $-0.4208$ & $-0.4202$
\end{tabular}
\end{tabular}}
\caption{Simulated correlation coefficient. The theoretical value for $\rho \to \nicefrac{1}{2}$   is $-0.4203$. \\ By properties of the correlation coefficient, $\mathbb{R}(V_1, V_2)$ equals $\mathbb{R}((\nicefrac{1}{2} - \rho)V_1, (\nicefrac{1}{2} - \rho)V_2)$.  }
\label{Chap4:T4}
\end{center}
\end{table}

From Table \ref{Chap4:T4}, we observe that as $\rho$ approaches $0.5$ from below, the simulation result approaches  $ \mathbb{R}\left({\rm lim}_{\rho \uparrow 1/2}\left((\nicefrac{1}{2} - \rho)V_1,(\nicefrac{1}{2} - \rho)V_2\right)\right) = -0.4203$. We also observe that the upper and lower limit of the confidence interval increase as $\rho$ approaches $\nicefrac{1}{2}$.

\begin{remark}
Notice from the simulation results  in Table \ref{Chap4:T4} that the correlation coefficient of the joint workload is not very sensitive to the traffic load.
\end{remark}

\begin{remark}
The scaled two-dimensional workload LST $\tilde{\nu}^{\star}(s_1, s_2)$ can be inverted numerically, cf. \cite{choudhury1994multidimensional, den2013wiener}. We have not been able to explicitly invert the LST. The scaled marginal distributions in heavy traffic are exponential (cf. Lemma \ref{Chap4:MarginalHT}), which suggests that the two-dimensional scaled workload distribution in heavy traffic might be a bivariate exponential distribution.   It is discussed in \cite[Theorem 4.2]{bladt2010construction} that the minimal correlation  of any bivariate exponential distributions is $1 - \nicefrac{\pi^2}{6} = -0.6449$. This does not exclude the possibility that the joint workload in heavy traffic has a bivariate exponential distribution, as our correlation equals $-0.4203$.
\end{remark}

For further validation of our heavy-traffic results, we plot the empirical cumulative distribution function (ECDF) of the scaled total workload in heavy traffic. For the parameters mentioned above, we  first compute the inverse Laplace transform of the expression given in \eqref{Chap4:LST_Total_worload} by using Talbot's method \cite{abate2006unified} in Matlab, then compare it with the simulation results.  The simulations are performed for the load $\rho = 0.2, 0.3, 0.4$ and $0.49$. Each simulation is performed for MaxTime ($1\times10^9$) which takes approximately $1$ hour. In Figure \ref{Chap4:fig:ECDF_jnvLST}, one can see that as $\rho$ approaches $0.49$ the simulation results also approach the results obtained from the empirical cumulative distribution computed numerically from the inverse LST of the expression given in \eqref{Chap4:LST_Total_worload}, i.e., ECDF\_jnvLST. 

\begin{figure}[h!]
\begin{center}
\includegraphics[width=8cm]{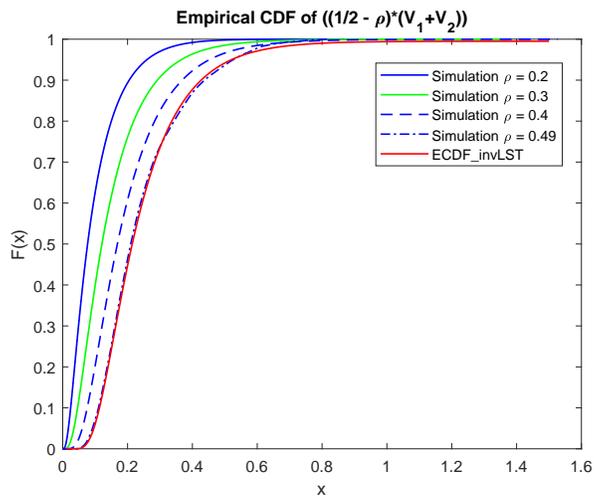}
\end{center}
\caption{Empirical cumulative distribution of the scaled total workload in heavy traffic.}
\label{Chap4:fig:ECDF_jnvLST}
\end{figure}

\section{Process limit in heavy traffic}\label{sec:processlimit}

Our main result so far in Theorem~\ref{thm:htjoint} established the heavy-traffic limit of the stationary joint distribution of the scaled workloads in the symmetric case.
In this section, we  investigate the heavy-traffic limit of the entire process of scaled workloads, under less restrictive assumptions on the input processes and the server switching process. We show that the stationary distribution of this limit process corroborates with the limit distribution of Theorem~\ref{thm:htjoint}, establishing that the time-stationary limit and the heavy-traffic limit can be interchanged.
Similar interchanges of limits have been previously demonstrated for different models in~\cite{gz2006} and~\cite{zz2008}.

As mentioned above, for the analysis in this section, we relax our assumptions regarding the input processes and the server switching process between the queues.
For the two queues, we assume Lévy subordinator inputs instead of constant fluid flows, and
the server visit periods form an alternating renewal process with possibly dependent consecutive visiting periods to server 1 and server 2. 
In the following, we make our assumptions precise.

We start with the server switching process: Specifically, we consider an i.i.d.\ sequence of nonnegative random pairs $\{(T_1(k),T_2(k)),\,k\ge 1\}$ distributed like $(T_1,T_2)$ where $\E(T_j)^2$, $j=1,2,$ are assumed finite (the marginal distributions of the $T_j$ are no longer assumed to be exponential). 
As before $\nicefrac{1}{c_j}=\E (T_j)$, and we denote $\sigma_j^2=\mathbb{V}\mathrm{ar}[T_j]$.
The covariance between consecutive visit periods to queue 1 and queue 2 is denoted by $\zeta=\mathbb{C}\mathrm{ov}(T_1,T_2)$.

Let $S_0=0$, $S_n=\sum_{k=1}^n(T_1(k)+T_2(k))$ for $n\ge 1$ and set $I(t)=1$ if $t\in\bigcup_{n=0}^\infty [S_n,S_n+T_1(n+1))$ and $I(t)=0$ otherwise. Assuming that $T_1+T_2$ is not almost surely (a.s.) zero, then with $p_1:=\nicefrac{c_2}{(c_1+c_2)}$ it is well known
that $\frac{1}{t}\int_0^tI(u){\rm d}u\to p_1$ a.s. and it is also known that
\begin{equation}
\frac{1}{\sqrt{n}}\int_0^{nt}(I(u)-p_1){\rm d}u
\end{equation}
converges weakly (in $D[0,\infty)$ endowed with the Skorohod $J_1$-topology) to a zero drift Brownian motion 
with variance given by
\begin{align}
\sigma^2&=\frac{\mathbb{V}\mathrm{ar}\left(\int_0^{T_1+T_2}I(u){\rm d}u-p_1(T_1+T_2)\right)}{\E(T_1+T_2)}
=\frac{\mathbb{V}\mathrm{ar}(T_1-p_1(T_1+T_2))}{\E(T_1+T_2)}\nonumber\\
&=\frac{c_1^2\sigma_1^2-2c_1c_2\zeta+c_2^2\sigma_2^2}{{(c_1+c_2)^3}}c_1c_2\ .
\end{align}
For a central limit theorem version of this, see \cite{takacs1959,hew2017}. This central limit version may also be concluded from \cite[Theorem. 3.2, page 178]{asmussen2003}. The functional limit theorem may be concluded from, e.g., \cite{gw1993,gw2002}. Let us denote this Brownian motion by $\sigma W(t)$ where $\{W(t),t\ge t\}$ denotes a Wiener process (standard Brownian motion).

Next we describe the input processes into the two queues, which we assume to be independent of the just described server switching process. We no longer assume that the input processes are constant fluid flows, but instead let the input into $Q_j$ be a Lévy process $\{J_j(t), t\geq0\}$, $j=1,2$.
To be precise, we assume that $\{J(t)\equiv (J_1(t),J_2(t)),\,t\ge 0\}$ is a bivariate subordinator with Laplace exponent $-\eta(s)$ where, for $(s_1,s_2) \in\mathbb{R}_+^2$,
\begin{align}
\eta(s_1,s_2)=b_1s_1+b_2s_2+\int_{\Rp^2}(1-e^{-s_1x_1-s_2x_2})\Pi({\rm d}x_1, {\rm d}x_2).
\end{align}
Here $(b_1,b_2)\in\mathbb{R}^2_+$ and $\Pi$ is the L\'evy measure  satisfying $\int_{\mathbb{R}_+^2}x_j\wedge 1\,\Pi({\rm d}x_1, {\rm d}x_2)<\infty$ for $j=1,2$. 
However, here we actually assume that $\int_{\mathbb{R}^2_+}x_j^2\Pi({\rm d}x_1, {\rm d}x_2)<\infty$, which is equivalent to the assumption that $\E (J_j^2(1))<\infty$ for $j=1,2$. 
Consistent with our earlier notation, for $j,i\in\{1,2\}$, we write
\begin{align}
\lambda_j&=\E(J_j(1))=b_j+\int_{\mathbb{R}_+^2}x_j\Pi({\rm d}x_1,{\rm d}x_2)=\frac{\partial\eta}{\partial s_j}(0+,0+);\nonumber\\
\sigma_{ji}&=\mathbb{C}\mathrm{ov}(J_j(1),J_i(1))=\int_{\mathbb{R}_+^2}x_jx_i\Pi({\rm d}x_1, {\rm d}x_2)
=\frac{-\partial^2\eta}{\partial s_j\partial s_i}(0+,0+)\ ,
\end{align}
and $\Sigma=(\sigma_{ji})_{j,i\in\{1,2\}}$ is the covariance matrix. Then $n^{-1/2}\left ( J_1(nt)-\lambda_1 nt,J_2(nt)-\lambda_2 nt\right)$ converges weakly to a zero mean ($2$-dimensional) Brownian motion with covariance matrix $\Sigma$. Let us denote this Brownian motion by $\{B(t)\equiv (B_1(t),B_2(t)),\,t\geq0\}$.
Having assumed that the processes $\{(T_1(k),T_2(k)),\,k\ge 1\}$ and $\{J(t), \, t \geq 0\}$ are independent, the Brownian motions $W$ (one-dimensional) and $B$ (two-dimensional) are independent as well.\\

We are now ready to describe the buffer content process of the two queues.
The cumulative input to $Q_j$ until time $t$ is $J_j(t)$, $j=1,2$. 
At instances where $I(t)=1$ (resp., $I(t)=0$) the server is working at $Q_1$ (resp., $Q_2$) at a rate of $\mu_1$ (resp., $\mu_2$). 
If we let $p_2=1-p_1$, and define the free processes
\begin{align}
X_1(t)&=J_1(t)-\mu_1\int_0^tI(u){\rm d}u\nonumber\\
&=J_1(t)-\lambda_1t+(\lambda_1-p_1\mu_1)t-\mu_1\int_0^t(I(u)-p_1){\rm d}u;\nonumber\\
X_2(t)&=J_2(t)-\mu_2\int_0^t(1-I(u)){\rm d}u\\
&=J_2(t)-\lambda_2t+(\lambda_2-p_2\mu_2)t+\mu_2\int_0^t(I(u)-p_1){\rm d}u,\nonumber
\end{align}
then the buffer content process associated with the $j$-th station ($j=1,2$) is given by the (continuous) functional
\begin{equation}
V_j(t)=X_j(t)-\inf_{0\le u\le t}X_j(u)\ .
\end{equation}
As is natural in our model, we assume that $\lambda_j>0$ for $j=1,2$ and that $0<p_1<1$. 
Let us replace $(\mu_1,\mu_2)$ by a sequence $(\mu_1^n,\mu_2^n)$, such that as $n\to\infty$,
\begin{equation}
\sqrt{n}(p_1\mu_1^n-\lambda_1,p_2\mu_2^n-\lambda_2)\to (\theta_1,\theta_2)\ .
\end{equation}
Although not necessary at this point, for later considerations we will assume that $\theta_j>0$, $j=1,2$.
For each value of $n$, $X_j^n(t)$ is the resulting free process with service rates $\mu_j^n$; and $V_j^n(t)=X_j^n(t)-\inf_{0\le u\le t}X_j^n(u)$ the corresponding buffer content process of $Q_j$, $j=1,2$.
Observing that $\mu_j^n\to \nicefrac{\lambda_j}{p_j}$, it follows that $n^{-1/2}X^n(nt)$ converges weakly to
\begin{align*}
X_1^\star(t)&=-\theta_1t+B_1(t)-\frac{\lambda_1\sigma}{p_1}W(t), \\ 
X_2^\star(t)&=-\theta_2t+B_2(t)+\frac{\lambda_2\sigma}{p_2}W(t).
\end{align*}
In particular, the covariance matrix of the limiting Brownian motion is given by
\begin{align}
\Sigma^\star&=
\begin{pmatrix}
\sigma_{11}+\frac{\lambda_1^2\sigma^2}{p_1^2}&\sigma_{12}-\frac{\lambda_1\lambda_2\sigma^2}{p_1p_2}\\
\\
\sigma_{12}-\frac{\lambda_1\lambda_2\sigma^2}{p_1p_2}&\sigma_{22}+\frac{\lambda_2^2\sigma^2}{p_2^2}
\end{pmatrix} .
\nonumber
\end{align}
By the continuous mapping theorem it also follows that $n^{-1/2}V^n(nt)$ converges weakly to $V^\star(t)$ with 
$V_j^\star(t)=X_j^\star(t)-\inf_{0\le u\le t}X_j^\star(u)$, $j=1,2$.\\

In the previous sections, we considered the special case $J_j(t)=\lambda_j t$, so that $\sigma_{ji}=0$ for $j,j=1,2$. If in addition $\sigma>0$ (note that this assumption only excludes the case in which $T_1(k)/T_2(k)$ is a fixed constant), we can define $\hat X_j^\star=\frac{p_j}{\lambda_j\sigma}X_j^\star$ and $\hat \theta_j=\frac{p_j}{\lambda_j\sigma}\theta_j$, $j=1,2$.
This results in
\begin{align}\label{eq:plusminus}
\hat X_1^\star(t)=-\hat \theta_1 t-W(t); ~~~
\hat X_2^\star(t)=-\hat \theta_2 t+W(t)\ .
\end{align}
Finally defining $\hat V_j^\star(t)=\hat X_j^\star(t)-\inf_{0\le u\le t}\hat X_j^\star(u)$, we observe that
$\hat V_j^\star(t)=\frac{p_j}{\lambda_j\sigma}V_j^\star(t)$, so that in order to study the stationary behavior of $V^\star(\cdot)$ it suffices to study that of $\hat V^\star(t)$. 
From now on it will be necessary that $\hat \theta_j>0$ for $j=1,2$, which is ensured by our earlier assumption that  $\theta_j>0$.

Let us first observe that for $s \in\mathbb{R}_+^2$ (actually for all $s \in\mathbb{R}^2$), from \eqref{eq:plusminus},  it follows after straightforward computations that
\[
\hat{k}(s_1,s_2)\equiv\log \E\left[e^{-s_1\hat X_1^\star(1)-s_2\hat X_2^\star(1)}\right]=\hat \theta_1 s_1+\hat \theta_2 s_2
+\frac{1}{2}(s_1-s_2)^2\ .
\]
With $\hat L_j^\star(t)=-\inf_{0\le u\le t}\hat X_j^\star(u)$, $j=1,2$, we know that $V_j^\star(t)=0$, for every point of (right) increase of $\hat L_j^\star(t)$. From this and the martingale of \cite{kw1992},
it may be concluded that the following is a zero mean martingale:
\begin{align}\label{Eq:Pre_FunctionalEquation_BivWorl}
\hat{k}(s_1,s_2)\int_0^t e^{-s_1\hat V_1^\star(u)-s_2\hat V_2^\star(u)}{\rm d}u&-e^{-s_1\hat V_1^\star(t)-s_2\hat V_2^\star(t)}+e^{-s_1\hat V_1^\star(0)-s_2\hat V_2^\star(0)}\nonumber
\\
&-s_1\int_0^t e^{-s_2 \tilde{V}_2^\star(u)}{\rm d}\hat L^\star_1(u)-s_2\int_0^te^{-s_1 \tilde{V}_1^\star(u)}{\rm d}\hat L_2^\star(u)\ .
\end{align}
It has become standard by now, see, e.g.,  \cite[Corollary 2.3]{k1993} (also from the theory of multivariate reflected Brownian motions on the nonnegative orthant), that if $\hat{v}^\star(s_1,s_2)$ is the LST of the stationary version of $\hat V^\star$, then taking expectations in Equation \eqref{Eq:Pre_FunctionalEquation_BivWorl} yields
\begin{align}
0&=t\hat{k}(s_1,s_2)\hat{\nu}^\star(s_1,s_2)-\hat{\nu}^\star(s_1,s_2)+\hat{\nu}^\star(s_1,s_2)\nonumber \\
&\quad -s_1\mathbb{E}\int_0^t e^{-s_2 \tilde{V}_2^\star(u)}{\rm d}\hat L^\star_1(u)
-s_2\mathbb{E}\int_0^t e^{-s_1 \tilde{V}_1^\star(u)}{\rm d}\hat L^\star_2(u)
\end{align}
and in particular for $t=1$ we have
\begin{equation}\label{Eq:Non_Sym_FunctionalEq}
\hat{k}(s_1,s_2)\hat{\nu}^\star(s_1,s_2)=s_1\hat{f}_1(s_2)+s_2 \hat{f}_2(s_1)\,,
\end{equation}
where  
\begin{align*}
\hat{f}_1(s_2)&=\mathbb{E}\int_0^1 e^{-s_2 \tilde{V}_2^\star(u)}{\rm d}\hat L^\star_1(u),\\
\hat{f}_2(s_1)&=\mathbb{E}\int_0^1 e^{-s_1 \tilde{V}_1^\star(u)}{\rm d}\hat L^\star_2(u)\,.
\end{align*}

Our objective is to determine the unknown function in the l.h.s.\ of \eqref{Eq:Non_Sym_FunctionalEq}: $\hat{\nu}^{\star}(s_1, s_2)$.

The present setting is in several respects much more general: a two-dimensional L\'evy input process, non-exponential visit periods, and asymmetry. 

In the symmetric case, viz. for $\hat{\theta}_1=\hat{\theta}_2=\hat{\theta}$, the key functional equation \eqref{Eq:Non_Sym_FunctionalEq} reduces to
\begin{equation}\label{Eq:Non_Sym_FunctionalEq_Symmetric}
\frac{s_1+ s_2+\frac{1}{2 \hat \theta}(s_1-s_2)^2}{s_1s_2}
\hat{\nu}^\star(s_1,s_2)=\frac{\hat{f}_1(s_2)}{s_2}+\frac{ \hat{f}_2(s_1)}{s_1},
\end{equation}
which is in essence identical to  \eqref{Scaling2HT1} for $\hat{\theta}=4c/\mu$. In this case, the starting point of the analysis matches, revealing that the results also match. It is important to note that, in  the symmetric case, although the analysis is identical to the one performed in Section  \ref{Chap4:HT_Analysis}, the setting of this section  is much broader than the one of Section \ref{Chap4:HT_Analysis}. 

In the analysis that follows, we do {\em not} restrict ourselves to a symmetric system as we did in Section \ref{Chap4:HT_Analysis}, instead we consider general $\theta_1$, $\theta_2$. For this general setting, we calculate the unknown function in the l.h.s.\ of \eqref{Eq:Non_Sym_FunctionalEq} using the BVM by applying Step A and Step B in an analogous manner as in Section \ref{Chap4:HT_Analysis}. Unfortunately, several of the convenient simplifications that transpire in the symmetric case and that  eventually led to the elegant result of Theorem \ref{HT_LST_Total_workload} are  not allowed in the asymmetric case $\theta_1\neq\theta_2$, as this can be seen in the analysis that follows and in the result of Theorem \ref{thm:htjoint_Levy_Model}. 

\paragraph{Kernel analysis.}
To apply the BVM, one needs to investigate the zeros of kernel $\hat{k}(s_1, s_2)$. By setting $\hat{k}(s_1, s_2) = 0$, we obtain

\begin{align}
\hat{s}_2^{\pm}(s_1)
&= s_1-\hat{\theta}_2\pm\sqrt{\hat{\theta}_2^2-2s_1( \hat{\theta}_1 + \hat{\theta}_2)}.
\end{align}
Note that $\hat{s}_2^{\pm}(s_1)$ has a single branching point at $s_1 = \nicefrac{ \hat{\theta}_2^2}{2( \hat{\theta}_1+ \hat{\theta}_2)}$. For real valued $s_1$ with $s_1 > \nicefrac{ \hat{\theta}_2^2}{2( \hat{\theta}_1+ \hat{\theta}_2)}$, the function $\hat{s}_2^{\pm}(s_1)$ is complex valued. Letting $\hat{s}_2^{\pm}(s_1) = u \pm i v$, we obtain, after straightforward computations, that 
\begin{align}\label{Parabola2}
v^2&= 2(\hat{\theta}_1+\hat{\theta}_2)\left(u+\frac{\hat{\theta}_2(2\hat{\theta}_1+\hat{\theta}_2)}{2(\hat{\theta}_1+\hat{\theta}_2)}\right),
\end{align}
which describes a parabola in the complex plane. We shall restrict ourselves to the following set:
$$\hat{E}_1  =  \set{(u, v) \in (-\nicefrac{\hat{\theta}_2(2\hat{\theta}_1+\hat{\theta}_2)}{2(\hat{\theta}_1+\hat{\theta}_2)}, \infty) \times \mathbb{R} \mid v^2 = 2(\hat{\theta}_1+\hat{\theta}_2)\left(u+\frac{\hat{\theta}_2(2\hat{\theta}_1+\hat{\theta}_2)}{2(\hat{\theta}_1+\hat{\theta}_2)}\right) }.$$
This domain will allow us to determine $\hat{f}_1(\cdot)$, while the symmetric domain obtained by considering the roots $\hat{s}_1^{\pm}(s_2)$ (which will result in a symmetric parabola with $\hat{\theta}_1$ and $\hat{\theta}_2$ interchanged) will allow us to determine  $\hat{f}_2(\cdot)$. 

\paragraph{BVM: Solution of the functional equation \eqref{Eq:Non_Sym_FunctionalEq}.}
Notice that, by definition, $\hat{f}_1(s_2)$ is analytic for $\mathrm{Re}[s_2] \geq 0$.
It still remains to show that $\hat{f}_1(s_2)$ is analytic on the strip $-\nicefrac{\hat{\theta}_2(2\hat{\theta}_1+\hat{\theta}_2)}{2(\hat{\theta}_1+\hat{\theta}_2)}< \mathrm{Re}[s_2] < 0$. 
We shall return to this point at a later stage, cf. Lemma \ref{Chap4:Claim2}.

Now we take $s_1$ with $s_1 > \nicefrac{ \hat{\theta}_2^2}{2( \hat{\theta}_1+ \hat{\theta}_2)}$ and $\hat{s}_2^{\pm}(s_1) = u \pm iv$, with $(u, v) \in \hat{E}_1$. For all such $(s_1, \hat{s}_2^{\pm}(s_1))$ pairs, the l.h.s. of \eqref{Eq:Non_Sym_FunctionalEq} becomes zero, and hence, for all $s_2 = \hat{s}_2^{\pm}(s_1)$, we have
\begin{equation}
\f{\hat{f}_1(s_2)}{s_2}=-\frac{ \hat{f}_2(s_1)}{s_1}.
\end{equation}
For $s_1 > \nicefrac{ \hat{\theta}_2^2}{2( \hat{\theta}_1+ \hat{\theta}_2)}$, the r.h.s. of the above equation is real, thus yielding
\begin{equation}\label{BVPJointHT2}
\mathrm{Re}\left[i\f{\hat{f}_1(s_2)}{s_2} \right] = 0, \, \, \mbox{for} \, \, s_2 =  \hat{s}_2^{\pm}(s_1) = u \pm iv, ~\mbox{with}~ (u, v) \in \hat{E}_1 \backslash \{(0, 0)\}.
\end{equation}

We thus see that $\nicefrac{\hat{f}_1(s_2)}{s_2} $ is analytic inside the contour $\hat{E}_1$, say $\hat{E}_1^+$, except for $s_2=0$ which is a pole in $\hat{E}_1^+$. The above problem now reduces to a Riemann-Hilbert problem with a pole, and with boundary $\hat{E}_1$,  see \cite[Section I.3.3]{cohen2000boundary}.  
To transform it into a (standard) Riemann-Hilbert problem on the unit circle $D$,
we define $\hat\phi_1$ (with inverse $\hat\psi_1$) to be a conformal mapping of the interior of the unit circle $D$ onto  $\hat{E}_1^+$ with normalization conditions 
$\hat{\phi}_1\left(-1\right)=\infty$, 
$\hat{\phi}_1(0)=\f{1 - \sqrt{2}\cos\left(\f{\pi \hat{\theta}_1}{2(\hat{\theta}_1+\hat{\theta}_2)} \right)}
{1 + \sqrt{2}\cos\left(\f{\pi \hat{\theta}_1}{2(\hat{\theta}_1+\hat{\theta}_2)} \right)}$, and  
$\hat{\phi}_1\left(1\right)=-\frac{\hat{\theta}_2(2\hat{\theta}_1+\hat{\theta}_2)}{2(\hat{\theta}_1+\hat{\theta}_2)}$. Following the same steps as in Section \ref{Chap4:HT_Analysis}, leading to Theorem \ref{HT_LST_Total_workload}, we again translate the Riemann-Hilbert BVP on and inside $\hat{E}_1$ to the simple Riemann-Hilbert BVP with a pole. The solution of the BVP 
\eqref{BVPJointHT2} is:
\begin{align}
\hat{f}_1(s_2) =\, &    \alpha_1 s_2- i \beta_1 \left(\hat{\psi}_1(s_2)-\f{1 - \sqrt{2}\cos\left(\f{\pi \hat{\theta}_1}{2(\hat{\theta}_1+\hat{\theta}_2)} \right)}
{1 + \sqrt{2}\cos\left(\f{\pi \hat{\theta}_1}{2(\hat{\theta}_1+\hat{\theta}_2)} \right)}\right)s_2 \nonumber\\
&+i \f{\bar{\beta} _1}{\hat{\psi}_1(s_2)-\f{1 - \sqrt{2}\cos\left(\f{\pi \hat{\theta}_1}{2(\hat{\theta}_1+\hat{\theta}_2)} \right)}
{1 + \sqrt{2}\cos\left(\f{\pi \hat{\theta}_1}{2(\hat{\theta}_1+\hat{\theta}_2)} \right)}} s_2, \, \, s_2 \in \hat{E}_1^+\cup\hat{E}_1\backslash\{0\},
\label{Eq:BVP:Levyf1}
\end{align}
where $\hat{\psi}_1(\cdot)$ is the conformal mapping from the parabola $\hat{E}_1$ to the unit circle $D$ given in the following lemma and the constants $ \alpha_1$ and $ \beta_1$, together with the full solution for the scaled buffer content processes are given in Theorem \ref{thm:htjoint_Levy_Model}.
\begin{lemma}\label{lemma:ConformalMap2}
For $z  \in \mathbb{C}$ and for $j=1,2$, the conformal map 
\begin{equation}\label{ConformalMap2}
\hat{\psi}_j(z) = 
\f{1 - \sqrt{2}\cosh\left(\f{\pi}{2(\hat{\theta}_1+\hat{\theta}_2)} \sqrt{2(\hat{\theta}_1+\hat{\theta}_2)z-{\hat{\theta}_j^2}}\right)}
{1 + \sqrt{2}\cosh\left(\f{\pi}{2(\hat{\theta}_1+\hat{\theta}_2)} \sqrt{2(\hat{\theta}_1+\hat{\theta}_2)z-\hat{\theta}_j^2}\right)}
\end{equation}
maps the interior of parabola 
$$v^2= 2(\hat{\theta}_1+\hat{\theta}_{2})\left(u+\frac{\hat{\theta}_{3-j}(2\hat{\theta}_j+\hat{\theta}_{3-j})}{2(\hat{\theta}_1+\hat{\theta}_2)}\right)$$ onto the interior of the unit circle $D$.
\end{lemma}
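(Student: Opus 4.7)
The plan is to mimic the three-step construction used in the proof of Lemma~\ref{lemma:ConformalMap}, adapted to the asymmetric geometry of $\hat{E}_j$. The idea is to first translate the parabola to the canonical Bieberbach form $v^2=4p(u+p)$, then apply Bieberbach's mapping onto the upper half-plane, and finally a Möbius transformation onto the unit disc.

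\textbf{Step 1: Translation to canonical form.} The parabola $\hat E_j$ is described by $v^2 = 2(\hat{\theta}_1+\hat{\theta}_2)(u+B_j)$ with $B_j=\frac{\hat{\theta}_{3-j}(2\hat{\theta}_j+\hat{\theta}_{3-j})}{2(\hat{\theta}_1+\hat{\theta}_2)}$. Setting $p=\frac{\hat{\theta}_1+\hat{\theta}_2}{2}$, so that $4p=2(\hat{\theta}_1+\hat{\theta}_2)$ agrees with the coefficient on the right, one checks by expanding $(\hat{\theta}_1+\hat{\theta}_2)^2-\hat{\theta}_{3-j}(2\hat{\theta}_j+\hat{\theta}_{3-j})=\hat{\theta}_j^2$ that $B_j-p=-\frac{\hat{\theta}_j^2}{2(\hat{\theta}_1+\hat{\theta}_2)}$. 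Consequently the translation $\hat{\eta}_j(z)=z-\frac{\hat{\theta}_j^2}{2(\hat{\theta}_1+\hat{\theta}_2)}$ conformally maps the interior of $\hat{E}_j$ onto the interior of the standard parabola $v^2=4p(u+p)$.

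\textbf{Step 2: Parabola to upper half-plane.} By \cite[p.~113]{bieberbach2000conformal}, the mapping $\xi(z)=i\cosh\!\left(\frac{\pi}{2\sqrt{p}}\sqrt{z}\right)$ conformally maps the interior of $v^2=4p(u+p)$ onto the upper half-plane $\{\mathrm{Im}\,\xi>0\}$; in the symmetric case of Lemma~\ref{lemma:ConformalMap} one has $p=4c/\mu$ and this formula reduces to $i\cosh\!\left(\tfrac{\pi}{4}\sqrt{\mu z/c}\right)$ as used there.

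\textbf{Step 3: Upper half-plane to unit disc.} The Möbius map $w(\zeta)=\frac{1+i\sqrt{2}\,\zeta}{1-i\sqrt{2}\,\zeta}$, as used in the proof of Lemma~\ref{lemma:ConformalMap}, conformally maps the upper half-plane onto $D$.

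\textbf{Step 4: Composition and simplification.} Since compositions of conformal maps are conformal \cite[Theorem III]{bieberbach2000conformal}, the map $\hat{\psi}_j(z)=w(\xi(\hat{\eta}_j(z)))$ is a conformal bijection from the interior of $\hat E_j$ onto $D$. To match the stated closed form, one computes
\[
\sqrt{\hat{\eta}_j(z)}=\frac{1}{\sqrt{2(\hat{\theta}_1+\hat{\theta}_2)}}\sqrt{\,2(\hat{\theta}_1+\hat{\theta}_2)z-\hat{\theta}_j^2\,},
\]
so $\frac{\pi}{2\sqrt{p}}\sqrt{\hat{\eta}_j(z)}=\frac{\pi}{2(\hat{\theta}_1+\hat{\theta}_2)}\sqrt{2(\hat{\theta}_1+\hat{\theta}_2)z-\hat{\theta}_j^2}$. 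Writing $i\cosh(\cdot)$ in polar form and applying $w(\cdot)$ then reproduces formula \eqref{ConformalMap2} after a short algebraic manipulation (the factor $i\sqrt{2}$ in $w$ meshes with the factor $i$ in $\xi$ to give the $\sqrt{2}\cosh$ terms without imaginary units). Specialising $\hat{\theta}_1=\hat{\theta}_2=4c/\mu$ recovers exactly the formula in Lemma~\ref{lemma:ConformalMap}, which serves as a consistency check.

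The main (minor) obstacle is purely bookkeeping in Step~4: one must carefully track the scaling constants so that the translation in Step~1, the inverse-square-root normalisation in Step~2 and the $\sqrt{2}$ in Step~3 combine to produce precisely the argument $\frac{\pi}{2(\hat{\theta}_1+\hat{\theta}_2)}\sqrt{2(\hat{\theta}_1+\hat{\theta}_2)z-\hat{\theta}_j^2}$ appearing inside the hyperbolic cosines in \eqref{ConformalMap2}. There is no deep analytic difficulty beyond Bieberbach's parabola mapping, which is cited.
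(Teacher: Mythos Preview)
Your proposal is correct and follows exactly the approach the paper intends: the paper's own proof simply states that it is identical to that of Lemma~\ref{lemma:ConformalMap} and is omitted, and you have faithfully carried out that three-step composition (translation, Bieberbach's parabola-to-half-plane map, M\"obius map to the disc) with the parameters adapted to the asymmetric case. Your bookkeeping in Steps~1 and~4 is accurate, and the consistency check against the symmetric case is a nice touch that the paper does not spell out.
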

\begin{proof} The proof of the lemma is identical to that of Lemma \ref{lemma:ConformalMap2} and as such it is omitted.
\end{proof}

Now we are in position to state the  main theorem of this section, in which we obtain an explicit expression  for the scaled stationary buffer content process LST  in heavy traffic. 

\begin{theorem}\label{thm:htjoint_Levy_Model}
For $j=1,2$, the scaled stationary buffer content process LST  in heavy traffic is given by, for $\mathrm{Re}[s_j] > -\nicefrac{\hat{\theta}_j(2\hat{\theta}_{3-j}+\hat{\theta}_j)}{2(\hat{\theta}_1+\hat{\theta}_2)}$,
\begin{align}
\label{Sec5:LST_Total_worload}
\hat{f}_j(s_{3-j})&=\mathbb{E}\left[\int_0^1 e^{-s_{3-j} \hat{V}_{3-j}^\star(u)}{\rm d}\hat L^\star_j(u)\right]\nonumber\\
&= s_{3-j}  \frac{\pi  \sin \left(\frac{\pi  \hat{\theta} _j}{2 \left( \hat{\theta} _1+ \hat{\theta} _2\right)}\right) }{\left(\sqrt{2} \sin \left(\frac{\pi   \hat{\theta} _{3-j}}{2 \left( \hat{\theta} _1+ \hat{\theta} _2\right)}\right)+1\right)^2}
\Bigg[-\frac{\cos \left(\frac{\pi   \hat{\theta} _j}{ \hat{\theta} _1+ \hat{\theta} _2}\right)+4}{2 \sin \left(\frac{\pi   \hat{\theta} _{3-j}}{2 \left( \hat{\theta} _1+ \hat{\theta} _2\right)}\right)+\sqrt{2}}\nonumber\\
&\quad+
2 \sqrt{2} \Bigg(\frac{1}{\sqrt{2} \sin \left(\frac{\pi   \hat{\theta} _{3-j}}{2 \left( \hat{\theta} _1+ \hat{\theta} _2\right)}\right)+1}-\frac{1}{\sqrt{2} \cosh \left(\frac{\pi  \sqrt{2 \left( \hat{\theta} _1+ \hat{\theta} _2\right) s_{3-j} - \hat{\theta} _j^2}}{2 \left( \hat{\theta} _1+\theta _2\right)}\right)+1}\Bigg)\nonumber\\
&\quad+\frac{1}{\sqrt{2} \Bigg(\frac{1}{\sqrt{2} \sin \left(\frac{\pi   \hat{\theta} _{3-j}}{2 \left( \hat{\theta} _1+ \hat{\theta} _2\right)}\right)+1}-\frac{1}{\sqrt{2} \cosh \left(\frac{\pi  \sqrt{2 \left( \hat{\theta} _1+ \hat{\theta} _2\right) s_{3-j} - \hat{\theta} _j^2}}{2 \left( \hat{\theta} _1+ \hat{\theta} _2\right)}\right)+1}\Bigg)}-\sqrt{2}\Bigg]
.
\end{align}
For $j=1,2$, the scaled joint stationary buffer content process LST in heavy traffic is given by,  for $\mathrm{Re}[s_j] > -\nicefrac{\hat{\theta}_j(2\hat{\theta}_{3-j}+\hat{\theta}_j)}{2(\hat{\theta}_1+\hat{\theta}_2)}$,
\begin{align}
\label{v^{star}(s_1, s_2)_2}
\hat{\nu}^\star(s_1,s_2)&=\mathbb{E}\left[\int_0^1 e^{-s_1\hat V_1^\star(u)-s_2\hat V_2^\star(u)}{\rm d}u\right]=\nonumber\\
&= \frac{s_1s_2}{\hat{k}(s_1, s_2)}\left( \f{\hat{f}_1(s_2)}{s_2}+\f{\hat{f}_2(s_1)}{s_1}\right),
\end{align}
where $\hat{k}(s_1, s_2) = \hat \theta_1 s_1+\hat \theta_2 s_2
+\frac{1}{2}(s_1-s_2)^2$.
\end{theorem}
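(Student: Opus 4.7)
My plan is to execute the Boundary Value Method already prepared in the text: the kernel analysis has produced the parabolic contour $\hat E_1$, and the general form \eqref{Eq:BVP:Levyf1} of the Riemann--Hilbert solution specifies $\hat f_1(s_2)$ up to real parameters $\alpha_1$ and (the imaginary part of) $\beta_1$, while Lemma \ref{lemma:ConformalMap2} supplies the explicit conformal map $\hat\psi_1$. Before exploiting \eqref{Eq:BVP:Levyf1} inside the full strip, I would first establish the analogue of Lemma \ref{Chap4:Claim} for $\hat f_1(s_2)/s_2$: adapting that lemma's argument, the only candidate singularities in $-\hat\theta_1(2\hat\theta_2+\hat\theta_1)/(2(\hat\theta_1+\hat\theta_2)) < \mathrm{Re}[s_2] < 0$ come from the zeros of $\cosh(\pi\sqrt{2(\hat\theta_1+\hat\theta_2)s_2 - \hat\theta_1^2}/(2(\hat\theta_1+\hat\theta_2)))$, located at $s_2 = (\hat\theta_1^2 - (2n+1)^2(\hat\theta_1+\hat\theta_2)^2)/(2(\hat\theta_1+\hat\theta_2))$ for $n\in\mathbb{Z}$; the values $n=0,-1$ give a removable singularity at $s_2=0$, and all others fall below the strip.

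To pin down the constants, I would use that $\hat f_1$ is real-valued on the positive real axis together with a Schwarz-reflection argument (as in the proof of Theorem \ref{HT_LST_Total_workload}) to force $\beta_1\in i\mathbb{R}$, leaving two real unknowns. Two boundary conditions then suffice. First, setting $s_1=0$ in the functional equation \eqref{Eq:Non_Sym_FunctionalEq} and using that the scaled marginal $\hat V_2^\star$ is a reflected Brownian motion with drift $-\hat\theta_2$ and unit variance (so $\hat\nu^\star(0,s_2) = 2\hat\theta_2/(2\hat\theta_2+s_2)$) yields $\hat f_2(0)=\hat\theta_2$, and by symmetry $\hat f_1(0)=\hat\theta_1$; matching this against \eqref{Eq:BVP:Levyf1} at $s_2=0$ (where the apparent $0/0$ resolves to $1/\hat\psi_1'(0)$ via $\hat\psi_1(0)=C_1$) provides the first equation. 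Second, since $0\le \hat f_1(s_2)\le \hat\theta_1$ for all $s_2\ge 0$, $\hat f_1(s_2)$ cannot grow linearly at infinity; using $\hat\psi_1(s_2)\to -1$ as $s_2\to +\infty$ (the normalization $\hat\phi_1(-1)=\infty$), the coefficient of $s_2$ in the resulting asymptotic expansion of \eqref{Eq:BVP:Levyf1} must vanish, which yields the second equation.

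Substituting the resulting values of $\alpha_1,\beta_1$ into \eqref{Eq:BVP:Levyf1}, inserting the explicit $\hat\psi_1$ from Lemma \ref{lemma:ConformalMap2}, and regrouping via the identities $\pi\hat\theta_1/(\hat\theta_1+\hat\theta_2) + \pi\hat\theta_2/(\hat\theta_1+\hat\theta_2) = \pi$, $\cos(\pi-x) = -\cos x$ and $2\sin^2 x = 1-\cos(2x)$, should collapse the expression into the closed form \eqref{Sec5:LST_Total_worload} for $j=1$; the $j=2$ case is recovered by the symmetric analysis on the parabola obtained from swapping $\hat\theta_1\leftrightarrow\hat\theta_2$. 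The joint formula \eqref{v^{star}(s_1, s_2)_2} then follows immediately from the rearranged functional equation $\hat\nu^\star(s_1,s_2) = (s_1s_2/\hat k(s_1,s_2))(\hat f_1(s_2)/s_2 + \hat f_2(s_1)/s_1)$, and extends to the claimed strip by analytic continuation.

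The principal obstacle is conceptual rather than symbolic: unlike the symmetric case, where the pole of the auxiliary function $g$ sits at the centre of the disk (yielding the clean $-\bar\beta/\psi$ term), here the pole at $s_2=0$ lives at the off-centre point $w=C_1\in D^+$. This off-centre pole is what ultimately produces the $\sin(\pi\hat\theta_j/(2(\hat\theta_1+\hat\theta_2)))$, $\cos(\pi\hat\theta_j/(\hat\theta_1+\hat\theta_2))$ and nested-fraction structure in \eqref{Sec5:LST_Total_worload}; keeping track of these contributions cleanly, and verifying that no additional free parameter arises (e.g., from an independent use of $\hat\nu^\star(0,0)=1$), is the most delicate part of the calculation. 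The remaining bookkeeping is lengthy but mechanical.
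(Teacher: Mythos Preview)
Your overall strategy---fix $\beta_1$ from $\hat f_1(0)=\hat\theta_1$, fix $\alpha_1$ from the behaviour at infinity, then insert $\hat\psi_1$ from Lemma~\ref{lemma:ConformalMap2} and simplify---is exactly the paper's proof. The Schwarz-reflection step you add is harmless but redundant: since $\hat\psi_1'(0)\in\mathbb{R}$, the single relation $i\bar\beta_1/\hat\psi_1'(0)=\hat\theta_1$ already forces $\beta_1\in i\mathbb{R}$ and determines it completely.

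There is, however, a concrete error in your analytic-continuation paragraph. The candidate singularities of $\hat f_1(s_2)$ in \eqref{Eq:BVP:Levyf1} are \emph{not} the zeros of $\cosh(\cdot)$: they are the zeros of $1+\sqrt{2}\cosh(\cdot)$ (where $\hat\psi_1$ blows up) together with the zeros of $\hat\psi_1(s_2)-C_1$ away from $s_2=0$. Your displayed values $s_2=\bigl(\hat\theta_1^2-(2n+1)^2(\hat\theta_1+\hat\theta_2)^2\bigr)/\bigl(2(\hat\theta_1+\hat\theta_2)\bigr)$ are the zeros of $\cosh$, and for $n=0,-1$ they equal the parabola vertex $-\hat\theta_2(2\hat\theta_1+\hat\theta_2)/\bigl(2(\hat\theta_1+\hat\theta_2)\bigr)$, not $0$; the symmetric-case collapse to a bare $1/\cosh$ that you are implicitly relying on does not occur when $\hat\theta_1\neq\hat\theta_2$. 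The paper accordingly defers this step to Lemma~\ref{Chap4:Claim2}, proved \emph{after} the explicit formula \eqref{Sec5:LST_Total_worload} is available, so that the actual denominators---giving the pole sets \eqref{zeros_Levy} and \eqref{poles_Levy}---can be read off directly.
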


\begin{proof}
Setting $s_2=0$ yields on the one hand that the l.h.s. of \eqref{Eq:BVP:Levyf1} is equal to $\hat{f}_{1}(0)=\hat{\theta}_1$ and on the  other hand that the r.h.s. of \eqref{Eq:BVP:Levyf1}  is equal to $i \bar{{\beta}}_1 /\hat{\psi}_1'(0)$. Substituting $\hat{\psi}_1(z)$ from Lemma \ref{lemma:ConformalMap2} we obtain the value for ${\beta}_1$. Moreover, since $\hat{f}_{1}(\infty)=0$, we obtain the  value for ${\alpha}_1$. The same approach can also be used for the determination of $\hat{f}_2(s_1)$. After tedious, but straightforward computations, Equation \eqref{Sec5:LST_Total_worload} follows. \end{proof}

It is now convenient to formulate and prove the postponed Lemma~\ref{Chap4:Claim2}.
\begin{lemma}\label{Chap4:Claim2}
For $j=1,2$, the $j$-th scaled stationary buffer content process LST  in heavy traffic is analytic on the strip  $ -\nicefrac{\hat{\theta}_j(2\hat{\theta}_{3-j}+\hat{\theta}_j)}{2(\hat{\theta}_1+\hat{\theta}_2)}<\mathrm{Re}[s_j] <0$.
\end{lemma}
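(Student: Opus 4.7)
The plan is to follow the template of Lemma~\ref{Chap4:Claim}: since Theorem~\ref{thm:htjoint_Levy_Model} already gives $\hat{f}_j(s_{3-j})$ in closed form, analyticity on the stated strip reduces to showing that no denominator in \eqref{Sec5:LST_Total_worload} vanishes for $\mathrm{Re}[s_{3-j}]$ strictly above $-\hat{\theta}_j(2\hat{\theta}_{3-j}+\hat{\theta}_j)/(2(\hat{\theta}_1+\hat{\theta}_2))$. The $s$-independent denominator $(\sqrt{2}\sin(\pi\hat{\theta}_{3-j}/(2(\hat{\theta}_1+\hat{\theta}_2)))+1)^{2}$ is strictly positive because the argument lies in $(0,\pi/2)$, so the only genuine candidate for a singularity is the factor $\sqrt{2}\cosh\bigl(z(s_{3-j})\bigr)+1$, where $z(s_{3-j})=\tfrac{\pi}{2(\hat{\theta}_1+\hat{\theta}_2)}\sqrt{2(\hat{\theta}_1+\hat{\theta}_2)s_{3-j}-\hat{\theta}_j^{2}}$.

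First I would solve $\cosh(z)=-1/\sqrt{2}$ exactly as in the proof of Lemma~\ref{Chap4:Claim}: the solutions are $z = i\pi(2n\pm 3/4)$ for $n\in\mathbb{Z}$, hence $z^{2}=-\pi^{2}(2n\pm 3/4)^{2}$. Translating back through $z^{2}=\tfrac{\pi^{2}}{4(\hat{\theta}_1+\hat{\theta}_2)^{2}}\bigl(2(\hat{\theta}_1+\hat{\theta}_2)s_{3-j}-\hat{\theta}_j^{2}\bigr)$ yields the candidate pole locations $s_{3-j}=\bigl(\hat{\theta}_j^{2}-4(\hat{\theta}_1+\hat{\theta}_2)^{2}(2n\pm 3/4)^{2}\bigr)/(2(\hat{\theta}_1+\hat{\theta}_2))$. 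The rightmost of these (the one nearest to $0$) occurs at $n=0$, giving $s_{3-j}^{\ast}=\bigl(\hat{\theta}_j^{2}-\tfrac{9}{4}(\hat{\theta}_1+\hat{\theta}_2)^{2}\bigr)/(2(\hat{\theta}_1+\hat{\theta}_2))$.

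The decisive step is the inequality $s_{3-j}^{\ast}\le -\hat{\theta}_{3-j}(2\hat{\theta}_j+\hat{\theta}_{3-j})/(2(\hat{\theta}_1+\hat{\theta}_2))$, which after clearing the common positive denominator is equivalent to $\hat{\theta}_j^{2}+\hat{\theta}_{3-j}(2\hat{\theta}_j+\hat{\theta}_{3-j})\le \tfrac{9}{4}(\hat{\theta}_1+\hat{\theta}_2)^{2}$. The left-hand side telescopes to $(\hat{\theta}_j+\hat{\theta}_{3-j})^{2}=(\hat{\theta}_1+\hat{\theta}_2)^{2}$, and $1<9/4$ finishes it. Hence every zero of $\sqrt{2}\cosh(z(s_{3-j}))+1$ lies to the left of the stated strip, and the reciprocal is analytic there.

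The main obstacle I anticipate is the composite denominator $\sqrt{2}\bigl(1/(\sqrt{2}\sin(\cdot)+1)-1/(\sqrt{2}\cosh(z(s_{3-j}))+1)\bigr)$ appearing in \eqref{Sec5:LST_Total_worload}, which at first glance could introduce additional poles where the two reciprocals coincide. I would dispose of it by going back to the unreduced representation \eqref{Eq:BVP:Levyf1}: the apparent pole at $\hat{\psi}_j(s_{3-j})=\hat{\phi}_j(0)$ is manufactured by the normalization $\hat{f}_j(0)=\hat{\theta}_j$ that fixed $\bar\beta_j$, so it is a removable singularity by construction. Equivalently, one can apply Euler's product formula for $\cosh$ (as in \eqref{Eq:ML_Expans} and \eqref{Eq:Prod_Expr_FPT_tildeC}) to rewrite $\hat{f}_j(s_{3-j})$ as a convergent rational expansion in $s_{3-j}$ whose only poles are exactly the points $s_{3-j}=\bigl(\hat{\theta}_j^{2}-4(\hat{\theta}_1+\hat{\theta}_2)^{2}(2n\pm 3/4)^{2}\bigr)/(2(\hat{\theta}_1+\hat{\theta}_2))$ identified above, thereby confirming that no spurious singularities intrude.
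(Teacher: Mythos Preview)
Your treatment of the denominator $1+\sqrt{2}\cosh\bigl(z(s_{3-j})\bigr)$ is correct and essentially identical to the paper's: you locate its zeros, identify the rightmost one at $s_{3-j}^{\ast}=\bigl(\hat\theta_j^{2}-\tfrac{9}{4}(\hat\theta_1+\hat\theta_2)^{2}\bigr)/\bigl(2(\hat\theta_1+\hat\theta_2)\bigr)$, and the telescoping inequality $(\hat\theta_1+\hat\theta_2)^{2}<\tfrac{9}{4}(\hat\theta_1+\hat\theta_2)^{2}$ places it strictly to the left of the strip.

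The gap is in your handling of the composite denominator. Your removability argument only disposes of the solution $s_{3-j}=0$. The explicit formula for $\hat\psi_j$ in \eqref{ConformalMap2} is an entire function of $s_{3-j}$ (because $\cosh\sqrt{\,\cdot\,}$ is entire), but it is a bijection only on the parabola interior $\hat E_j^{+}$, and the strip is \emph{not} contained in $\hat E_j^{+}$ (the strip extends to all imaginary parts). Outside that region the equation $\hat\psi_j(s_{3-j})=\hat\psi_j(0)$, equivalently $\cosh\bigl(z(s_{3-j})\bigr)=\cos\bigl(\tfrac{\pi\hat\theta_j}{2(\hat\theta_1+\hat\theta_2)}\bigr)=\sin\bigl(\tfrac{\pi\hat\theta_{3-j}}{2(\hat\theta_1+\hat\theta_2)}\bigr)$, has infinitely many further real solutions
\[
s_{3-j}=\frac{\hat\theta_j^{2}-\bigl(-\hat\theta_j+4(\hat\theta_1+\hat\theta_2)n\bigr)^{2}}{2(\hat\theta_1+\hat\theta_2)},\qquad n\in\mathbb{Z}\setminus\{0\}.
\]
At none of these does the prefactor $s_{3-j}$ vanish, so they are genuine poles of the expression \eqref{Sec5:LST_Total_worload}; the normalization that fixed $\bar\beta_j$ cancels only the $n=0$ pole. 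Your Euler-product alternative does not help either: expanding $\cosh$ as a product reorganises the zeros of $1+\sqrt{2}\cosh$, but says nothing about the separate family where $\cosh$ equals the fixed constant $\sin\bigl(\tfrac{\pi\hat\theta_{3-j}}{2(\hat\theta_1+\hat\theta_2)}\bigr)$.

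The paper closes this gap by computing this second family explicitly and verifying, via an inequality of the same flavour as yours (it reduces to $(\hat\theta_1+\hat\theta_2)^{2}<\bigl(4(\hat\theta_1+\hat\theta_2)|n|-\hat\theta_j\bigr)^{2}$ for $|n|\ge 1$), that every such zero lies to the left of the strip boundary. The repair is therefore small---one more explicit root calculation of exactly the kind you already carried out---but without it the argument is incomplete.
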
 
\begin{proof}
Similar to the proof of Lemma \ref{Chap4:Claim}, we need to show that $\hat{f}_1(s_2)$ has no poles in  $ -\nicefrac{\hat{\theta}_j(2\hat{\theta}_{3-j}+\hat{\theta}_j)}{2(\hat{\theta}_1+\hat{\theta}_2)}<\mathrm{Re}[s_j] <0$. This is equivalent to considering the roots of the two denominators appearing in  Equation \eqref{Sec5:LST_Total_worload}, i.e., the zeros of $1+ \sqrt{2}\cosh\left(\f{\pi}{2(\hat{\theta}_1+\hat{\theta}_2)} \sqrt{2(\hat{\theta}_1+\hat{\theta}_2)s_{3-j}-\hat{\theta}_{j}^2}\right)$ and the zeros of
\begin{align*}
&\frac{1}{\sqrt{2} \sin \left(\frac{\pi   \hat{\theta} _{3-j}}{2 \left( \hat{\theta} _1+ \hat{\theta} _2\right)}\right)+1}-\frac{1}{\sqrt{2} \cosh \left(\frac{\pi  \sqrt{2 \left( \hat{\theta} _1+ \hat{\theta} _2\right) s_{3-j} - \hat{\theta} _j^2}}{2 \left( \hat{\theta} _1+ \hat{\theta} _2\right)}\right)+1}
.\end{align*}
For the former zeros, note that these are 
\begin{align}\label{zeros_Levy}
s_{3-j}=\frac{1}{2 \left( \hat{\theta} _1+ \hat{\theta} _2\right) }\left( 
\hat{\theta} _j^2-4(\hat{\theta} _1+ \hat{\theta} _2)^2(\nicefrac{3}{4}+2n)^2
\right),\ n\in\mathbb{Z}.
\end{align}
For the latter zeros, straightforward computations reveal that these are
\begin{align}\label{poles_Levy}
s_{3-j}=\frac{1}{2 \left( \hat{\theta} _1+ \hat{\theta} _2\right) }\left( 
\hat{\theta} _j^2-(-\hat{\theta}_j+4(\hat{\theta} _1+ \hat{\theta} _2)n)^2
\right),\ n\in\mathbb{Z}\setminus\{0\},
\end{align}
where we needed to exclude the case $s_{3-j}=0$ (which is equivalent to $n=0$ in the last expression), as this is not a pole for Equation \eqref{Sec5:LST_Total_worload}. In both cases,  \eqref{zeros_Levy} and \eqref{poles_Levy}, it is straightforward to show that $s_{3-j}<-\nicefrac{\hat{\theta}_j(2\hat{\theta}_{3-j}+\hat{\theta}_j)}{2(\hat{\theta}_1+\hat{\theta}_2)}$ for all $n$.
\end{proof}
Concluding this section, we would like to remark that in case $\hat{\theta}_1=\hat{\theta}_2=\hat{\theta}$, the result of Theorem \ref{thm:htjoint_Levy_Model} reduces exactly to that of Theorem \ref{thm:htjoint} for $\hat{\theta}=\nicefrac{4c}{\mu}$. This proves that the two limits (stationarity and heavy traffic) commute. Moreover, one can easily verify that, in the asymmetric case, taking the limit $\hat\theta_j\downarrow0$, while $\hat\theta_{3-j}>0$ yields
\begin{align*}
\lim_{\hat\theta_j\downarrow0}\hat{f}_j(s_{3-j})&=\lim_{\hat\theta_j\downarrow0}\mathbb{E}\left[\int_0^1 e^{-s_{3-j} \hat{V}_{3-j}^\star(u)}{\rm d}\hat L^\star_j(u)\right]=0,
\end{align*}
as the $\sin \left(\frac{\pi   \hat{\theta} _j}{2 \left( \hat{\theta} _1+ \hat{\theta} _2\right)}\right)$ becomes zero and all other quantities are bounded.

\section*{Acknowledgements }
The  authors  gratefully acknowledge useful discussions with Sindo N\'u\~nez Queija from the Korteweg-de Vries Institute at the University of Amsterdam, who provided insight and expertise during the course of this research. 

The work of Stella Kapodistria and Onno Boxma is supported by the Netherlands Organisation for Scientific Research (NWO) through Gravitation-grant NETWORKS-024.002.003. The research of Mayank Saxena was funded by the NWO TOP-C1 project of the Netherlands Organisation for Scientific Research. Offer Kella is supported by grant No. 1647/17 from the Israel Science Foundation and the Vigevani Chair in Statistics.

\end{document}